\documentclass[11pt]{article}
\usepackage{amsthm}
\usepackage{amssymb}
\usepackage{epsfig}
\usepackage{amsthm}
\usepackage{srcltx}
\usepackage[dvipsnames,usenames]{color}
\usepackage{paralist}
\usepackage{amsmath}
\usepackage{amsfonts}
\usepackage{float}



\setcounter{page}{1} \setcounter{section}{0}
\setcounter{subsection}{0}



\setlength{\topmargin}{-.2cm} \setlength{\oddsidemargin}{-.0cm}
\setlength{\textheight}{8.6 in} \setlength{\textwidth}{6 in}



\newcounter{contador}
\newcounter{teoA}

\newtheorem{teo}[contador]{Theorem}
\newtheorem{lem}[contador]{Lemma}

\newtheorem{corol}[contador]{Corollary}
\newtheorem{nota}[contador]{Remark}

\newcounter{ex}









\newcommand{\dr}{{\rm{{d}}}}


\newcommand{\R}{{\mathbb R}}



\title{Phase portraits of random planar homogeneous vector fields\footnote{The authors are supported by
Ministry of Science and Innovation--State Research Agency of the
Spanish Government through grants MTM2016-77278-P (MICINN/AEI/FEDER,
UE, first and second authors) and DPI2016-77407-P
 (MICINN/AEI/FEDER, UE, third author). The first and second authors are also supported by the grant 2017-SGR-1617  from
AGAUR,  Generalitat de Catalunya. The third author acknowledges the
group's research recognition 2017-SGR-388 from AGAUR, Generalitat de
Catalunya.}}

\author{Anna Cima$^{(1)}$, Armengol Gasull$^{(1)}$ and V\'{\i}ctor Ma\~{n}osa$^{(2)}$
  \\*[.1truecm]
{\small \textsl{$^{(1)}$ Departament de Matem\`{a}tiques, Facultat
de Ci\`{e}ncies,}}
\\*[-.25truecm] {\small \textsl{Universitat Aut\`{o}noma de Barcelona,}}
\\*[-.25truecm] {\small \textsl{08193 Bellaterra, Barcelona, Spain}}
\\*[-.25truecm] {\small \textsl{cima@mat.uab.cat,
gasull@mat.uab.cat}}\\
\\*[-.25truecm] {\small \textsl{$^{(2)}$ Departament de Matem\`{a}tiques,}}
\\*[-.25truecm] {\small \textsl{Universitat Polit\`{e}cnica de Catalunya}}
\\*[-.25truecm] {\small \textsl{Colom 11, 08222 Terrassa, Spain}}
\\*[-.25truecm] {\small \textsl{victor.manosa@upc.edu}}}
\begin{document}

\maketitle
\begin{abstract}  In this paper, we study the probability of
occurrence of phase portraits in the set of random planar
homogeneous polynomial vector fields, of degree $n$. In particular,
for $n=1,2,3,$ we give the complete solution of the problem; that
is, we either give the exact value of each probability  of
occurrence or we estimate it by using the Monte Carlo method.
Remarkably is that all but two of these phase portraits are
characterized by the index at the origin and by the number of
invariant straight lines through this point.
\end{abstract}

\noindent {\sl  Mathematics Subject Classification 2010:}
37H10, 34F05.

\noindent {\sl Keywords:}  Ordinary differential equations with
random coefficients;  planar homogeneous vector fields; index; phase portraits.


\section{Introduction and main results}

Systems of ordinary differential equations, or equivalently vector
fields, are ubiquitous tools in the mathematical modelling of
physical phenomenon. When studying parametric families of such
vector fields typically one tries to know the different types of
phase portraits that can appear in the family and what are their
characterizations: that is, to obtain the \emph{parametric
bifurcation diagram} of the family. A second level of study could be
to establish the measure of the occurrence, or repetitiveness of
each possible phase portrait among the different phase portraits in
the family. Looking for this coincidence, or other dynamic
characteristic such as the existence of attractors, invariant
straight lines,  or limit cycles, are the aims of some recent works:
\cite{AL,CGM,LPP} and \cite{PLPP}.

The study of the probability of appearance  of the different phase
portraits may be also of practical interest. For instance,  in some
cases, to model real-life situations, we deal with dynamical systems
having  natural variability in the parameters. To include this
uncertainty in the model, we must treat with families of vector
fields whose parameters vary randomly. See \cite{CS10,GS,SC09} for
example.

Historically, the interest in this approach can be also traced back  to
A.N. Kolmogorov, from whom there is an interesting anecdote that we
have encountered in \cite{LBA}. According to V.I.~Arnold
\cite{Arnold}, it seems that Kolmogorov proposed to his students of
the Moscow State University a problem that should give a statistical
measure of the quadratic vector fields having limit cycles. He gave
them several hundreds of such vector fields with randomly chosen
coefficients. Each student was asked to find the number of limit
cycles of his/her vector field. Surprisingly, the experiment gave
that none of the vector fields had any limit cycle, although, since
hyperbolic limit cycles form open sets in the space of coefficients,
the probability to get them for a random choice of the coefficients
is positive. In fact, the probability of having obtained this result is very low: if $p$ is the probability of a quadratic vector field to have some limit cycle then, assuming independence, the probability that none of the vector fields had limit cycles is $(1-p)^n$, where $n$ is the number of students involved.  In \cite{AL}, $p$ is estimated to be $p\approx 0.0323$, hence assuming $n=200$ we get
$(1-p)^{200}\approx 0.0014$.

In this paper we  study the probabilities  of the different phase
portraits appearing in random planar homogeneous polynomial vector
fields of degree $n.$ Their phase portraits in the Poincar\'{e} disk
(\cite{DLA}) are well understood and they can be described in terms
of the number of real zeroes of some associated polynomials, their
relative position and several algebraic inequalities involving these
zeroes and the parameters of the vector fields, see~\cite{A,CL,D}.

 We will focus on the
quadratic and cubic cases and for completeness we also will tackle
the  linear case.  In fact, the results for  $n=1$ are well-known
(\cite{LPP,PLPP}) and, for instance,  they are proposed as an
exercise  in the S.~Strogatz book \cite[Exer. 5.2.14, p.
143]{Strog}. We include a simple and self-contained proof for this
case. Our characterization of the different phase portraits, for
$n\in\{1,2,3\},$ will be mainly based  on the computation of the
index at the origin, $i,$ and on the study of the number of
invariant lines through it, $l.$ As we will see, these two numbers
provide a complete algebraic characterization, by means of
inequalities,  of all the different classes of phase portraits with
positive probability when $n$ is 1 or $2,$ and almost a complete one
for $n=3.$

In the cases that we have not been able to compute analytically the
probabilities,  we give their estimations  by using the Monte Carlo
method (\cite{BFS,Morgan}) and the algebraic classification tools
developed in this paper.

We will say that
\begin{align}
    F_n(x,y)&=P_n(x,y)\frac{\partial}{\partial x}+Q_n(x,y)
    \frac{\partial}{\partial y}\nonumber\\{}&=\left(\sum\limits_{i+j=n}
     A_{i,j} x^i y^j\right)\frac{\partial}{\partial x}+\left(\sum\limits_{i+j=n} B_{i,j} x^i y^j\right)\frac{\partial}{\partial y}\label{e:rhvf}
\end{align} is a \emph{random planar homogeneous polynomial vector field of degree $n$} if  all the variables
$A_{i,j}$ and $B_{i,j}$ are independent normal random variables with
zero mean and standard deviation one. For short, we will write
$A_{i,j}\sim \mathrm{N}(0,1)$ and $B_{i,j}\sim \mathrm{N}(0,1).$ The
hypothesis that all  the coefficients are $\mathrm{N}(0,1)$  is
commonly used,  see for instance~\cite{AL,CGM,LPP,PLPP} and it is
quite natural and well motivated. In
Section~\ref{s:probabilityspace} we briefly recall this motivation.

In general we will denote by $P(W)$ the probability that a phase portrait of type $W$ occurs, modulus time orientation.
Next results  collect  our main achievements.

\begin{teo}\label{t:teolineals}
    Consider the linear random vector fields $F_1$ of the form~\eqref{e:rhvf}.
    Their phase portraits (modulus time orientation) with positive probability
     are the three ones shown in Figure~\ref{f:dibuix1}. They are completely determined
      by the couple  $(i,l)=$(index, number of invariant lines) at the origin.  Moreover,
    \[
    P(L_1)=\frac12,\quad P(L_2)= \frac{\sqrt{2}}{2}-\frac{1}{2}\simeq 0.20711 \quad\mbox{and}\quad P(L_3)= 1-\frac{\sqrt{2}}{2}\simeq 0.29289.
    \]
    Finally, the probability of the origin to be a global attractor
    is $1/4$ and the one of being a global repeller is also $1/4.$
\end{teo}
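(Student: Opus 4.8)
The plan is to classify all linear vector fields $F_1(x,y) = (A_{1,0}x + A_{0,1}y)\partial_x + (B_{1,0}x + B_{0,1}y)\partial_y$ according to their phase portraits at the origin, which for a linear system $\dot{\mathbf{x}} = M\mathbf{x}$ are governed entirely by the $2\times 2$ matrix
\[
M = \begin{pmatrix} A_{1,0} & A_{0,1} \\ B_{1,0} & B_{0,1} \end{pmatrix}
\]
whose four entries are i.i.d. $\mathrm{N}(0,1)$. The key invariants are $i$ and $l$: the index at the origin equals $+1$ when $\det M > 0$ (nodes, foci, centers) and $-1$ when $\det M < 0$ (saddles); the number of invariant lines $l$ corresponds to the real eigendirections of $M$. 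First I would recall the standard trace-determinant classification, observing that since the coefficients are continuous random variables, degenerate cases ($\det M = 0$, repeated eigenvalues, or pure imaginary eigenvalues, i.e. $\operatorname{tr} M = 0$ with $\det M > 0$) all occur on measure-zero sets and can be discarded. Thus with probability one we land in one of the open regions of the trace-determinant plane, giving the three topological types $L_1, L_2, L_3$ in Figure~\ref{f:dibuix1}, distinguished precisely by the pair $(i,l)$.

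**Next I would set up the probability computation.** The discriminant of the characteristic polynomial $\lambda^2 - (\operatorname{tr} M)\lambda + \det M$ is $\Delta = (\operatorname{tr} M)^2 - 4\det M = (A_{1,0} - B_{0,1})^2 + 4 A_{0,1} B_{1,0}$. The saddle case ($\det M < 0$, $l=2$, $i=-1$) has probability $P(\det M < 0)$; the two real-eigenvalue-same-sign node cases and the complex (focus) cases split the event $\det M > 0$ according to the sign of $\Delta$. So the whole problem reduces to computing two probabilities: $p_1 := P(\det M < 0)$ and $p_2 := P\big(\det M > 0 \text{ and } \Delta > 0\big)$, after which the three probabilities follow by bookkeeping and the constraint $P(L_1)+P(L_2)+P(L_3)=1$.

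**The main obstacle — and the source of the irrational constants $\tfrac{\sqrt2}{2}$ — is evaluating these Gaussian integrals.** For $p_1$ I would write $\det M = A_{1,0}B_{0,1} - A_{0,1}B_{1,0}$, a difference of two independent products of standard normals; each product has a known (Bessel-type / variance-gamma) density, so $\det M$ is symmetric about zero, immediately giving $P(\det M < 0) = \tfrac12$ once one checks $P(\det M = 0) = 0$. The genuinely delicate part is $p_2$: I expect to pass to the variables $u = A_{1,0} - B_{0,1}$ and the product $A_{0,1}B_{1,0}$ so that the event becomes $\{\Delta > 0,\ \det M > 0\}$, and then exploit the rotational/scaling symmetries of the Gaussian measure — likely introducing polar-type coordinates or using the fact that various linear combinations remain independent Gaussians — to reduce to an integral over an angular sector whose boundary is a hyperbola $\operatorname{tr}^2 = 4\det$. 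That angular measure is what produces a value involving $\sqrt2$, and carrying out this integral carefully (tracking which topological type each region corresponds to) is where the real work lies.

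**Finally**, for the global attractor/repeller claim I would note that the origin is a global attractor exactly when both eigenvalues have negative real part, i.e. $\operatorname{tr} M < 0$ and $\det M > 0$ (a stable node or stable focus). By the sign symmetry $M \mapsto -M$ of the Gaussian measure, $P(\operatorname{tr} M < 0,\ \det M > 0) = P(\operatorname{tr} M > 0,\ \det M > 0)$, and these two together with the saddle/center events partition $\{\det M > 0\}$ up to measure zero. Since $\operatorname{tr} M = A_{1,0} + B_{0,1}$ is itself a centered Gaussian independent of the sign structure enough to give $P(\operatorname{tr} M < 0 \mid \det M > 0) = \tfrac12$, and $P(\det M > 0) = \tfrac12$, I obtain the attractor probability $\tfrac12 \cdot \tfrac12 = \tfrac14$, with the repeller probability equal by the same $M \mapsto -M$ symmetry.
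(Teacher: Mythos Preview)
Your outline coincides with the paper's direct proof: classify via trace--determinant, discard measure-zero strata, get $P(L_1)=P(\det M<0)=\tfrac12$ from the symmetry of $\det M$, and deduce the attractor/repeller probability $\tfrac14$ from the involution $M\mapsto-M$. (Your aside that $\operatorname{tr}M$ is ``independent of the sign structure'' is not justified---$\operatorname{tr}M$ and $\det M$ are dependent---but the $M\mapsto-M$ symmetry you already invoked suffices by itself, so this is harmless.) The gap is that the one nontrivial step, the Gaussian integral producing $\sqrt2$, is left as a plan. Two remarks on that step: it is cleaner to target $P(L_3)=P(\Delta<0)$ rather than your $p_2=P(\det M>0,\,\Delta>0)$, since $\Delta<0$ already forces $\det M>0$ and you are left with a single inequality; and the paper carries this integral out explicitly by passing to the three independent variables $(X,Y,Z)=(A_{0,1},B_{1,0},A_{1,0}-B_{0,1})$ with $Z\sim\mathrm{N}(0,2)$ and using the scaled spherical change $x=r\sin t\cos s$, $y=r\sin t\sin s$, $z=\sqrt2\,r\cos t$, after which the radial part factors off and the remaining angular integral over $\{\cos^2 t<2\sin^2 t\sin s\cos s\}$ is elementary, giving $P(L_3)=1-\tfrac{\sqrt2}{2}$.

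The paper also offers a second route that bypasses the integral altogether: the Edelman--Kostlan formula applied to the random quadratic governing the invariant lines gives the expected number of invariant lines $\Lambda_1=\sqrt2$, and since $L_1,L_2$ each have two invariant lines while $L_3$ has none, the identity $2\big(P(L_1)+P(L_2)\big)=\Lambda_1=\sqrt2$ together with $P(L_1)=\tfrac12$ yields $P(L_2)=\tfrac{\sqrt2}{2}-\tfrac12$ directly.
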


\begin{figure}[H]
    \begin{center}
        \includegraphics[scale=1]{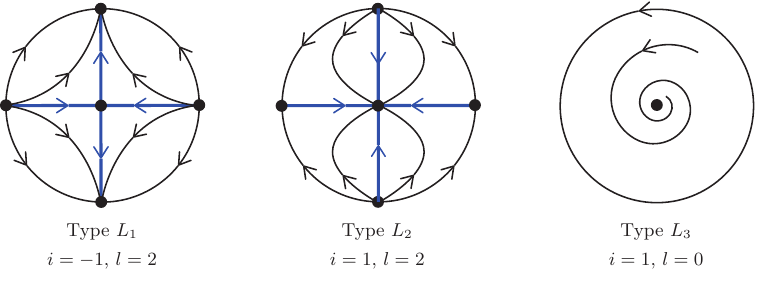}
        \caption{Phase portraits of planar linear vector fields with positive
            probability,  modulus time orientation.}
        \label{f:dibuix1}
    \end{center}
\end{figure}

To state our results for $n=2,3$ we need to introduce next two values
\begin{align}
\Lambda_2=&\frac{1}{\pi}\,\int_{-\infty }^{\infty }\!{\frac {\sqrt
        {2\,{t}^{8}+8\,{t}^{6}+13\,{t
            }^{4}+8\,{t}^{2}+2}}
        {  \left( {t}^{2}+1
            \right)  \left( t^4+t^2+1 \right)              }}\,{\rm
    d}t\simeq 1.64343,\label{eq:l2}\\
\Lambda_3=&\frac{1}{\pi}\,\int_{-\infty }^{\infty }{\frac{\sqrt
        {2{t}^{8}+4\,{t}^{6}+ 12\,{t}^{4}+4\,{t}^{2}+2}}{ \left( {t}^{2}+1
            \right)  \left( t^4+1 \right)
}}\,{\rm d}t\simeq 1.81225,\label{eq:l3}
\end{align}
which, as we will see in item $(c)$ of Theorem~\ref{te:tot},
correspond to the expected number of invariant straight lines for
vector field~\eqref{e:rhvf} through the origin, for $n=2$ and $n=3,$
respectively. These values are obtained by using the powerful tools
introduced in the nice  paper of Edelman and~Kostlan~\cite{EK}.

\begin{teo}\label{t:teoquad}
      Consider the quadratic random vector fields $F_2$ of the form~\eqref{e:rhvf}.
    Their phase portraits with positive probability are the five ones shown in Figure~\ref{f:dibuix2}.
    They are completely determined by the couple  $(i,l)=$(index, number of invariant lines) at the origin.
     Moreover, their corresponding probabilities, in addition to $\sum_{j=1}^5 P(Q_j)=1,$  satisfy
    \[
P(Q_1)+P(Q_2)+P(Q_3)=\frac{\Lambda_2-1}2\simeq 0.32172,\quad P(Q_1)=P(Q_3)+P(Q_5),
    \]
and they are estimated in Table~\ref{t:taula2}.
\end{teo}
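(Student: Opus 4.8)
The plan is to pass to complex/polar coordinates, reduce the whole problem to two integer invariants defined on a full-measure \emph{nondegenerate} stratum of coefficient space, and then read off the two probabilistic identities from an expected-value computation and from a single measure-preserving symmetry of the ensemble.

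Writing $\dot z=w(z,\bar z)$ with $z=x+iy$, a quadratic homogeneous field \eqref{e:rhvf} becomes $w=\alpha z^2+\beta z\bar z+\gamma\bar z^2$, where $\alpha,\beta,\gamma$ are (complex) linear combinations of the $A_{i,j},B_{i,j}$. First I would discard a measure-zero set: the origin fails to be an isolated singularity, or one of the auxiliary polynomials below acquires a repeated root or a root on the unit circle, only on a finite union of proper algebraic subvarieties, which carry no Gaussian mass; this gives $\sum_{j=1}^5 P(Q_j)=1$ once the five classes are pinned down. On the unit circle $w(\theta)=\alpha e^{2i\theta}+\beta+\gamma e^{-2i\theta}$, so the index $i$, being the winding number of $w$, equals $2(k-1)$ where $k\in\{0,1,2\}$ is the number of zeros of $g(v)=\alpha v^2+\beta v+\gamma$ inside the unit disk; hence $i\in\{-2,0,2\}$. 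The invariant lines are the real linear factors of the cubic form $xQ_2-yP_2=\mathrm{Im}(\bar z\,w)$; writing $\mathrm{Im}(\bar z w)=0$ in the variable $s=e^{2i\theta}$ gives a cubic whose roots are preserved by $s\mapsto 1/\bar s$, so its off-circle roots occur in pairs and the number $l$ of roots on $|s|=1$ is odd, i.e. $l\in\{1,3\}$. Explicit representatives ($\dot z=z^2,\ \bar z^2,\ z\bar z$ and small perturbations) show that the pairs $(i,l)=(-2,3),(0,3),(2,3),(0,1),(2,1)$ all occur; invoking the topological classification of homogeneous quadratic fields in \cite{A,CL,D} matches these with distinct Poincar\'e-disk portraits (modulus time orientation) and shows $(i,l)$ is a complete invariant on this stratum. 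Label them $Q_1,\dots,Q_5$ in that order.

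Relation $P(Q_1)+P(Q_2)+P(Q_3)=(\Lambda_2-1)/2$ I would obtain as an expectation. Since $l\in\{1,3\}$ almost surely, $\mathbb E[l]=1\cdot P(l=1)+3\cdot P(l=3)=1+2P(l=3)$, and the portraits with $l=3$ are exactly $Q_1,Q_2,Q_3$, so $P(Q_1)+P(Q_2)+P(Q_3)=P(l=3)=\tfrac12\bigl(\mathbb E[l]-1\bigr)$. But $l$ is the number of real roots of a Gaussian random cubic whose coefficients are explicit linear forms in the $A_{i,j},B_{i,j}$, whose expected number of real roots is the Edelman--Kostlan/Kac--Rice integral, namely $\Lambda_2$ (this is item $(c)$ of Theorem~\ref{te:tot}, \cite{EK}). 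Hence $\mathbb E[l]=\Lambda_2$ and the claim follows.

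The remaining identity $P(Q_1)=P(Q_3)+P(Q_5)$ I would deduce from the involution $\sigma\colon B_{i,j}\mapsto -B_{i,j}$, equivalently $(P,Q)\mapsto(P,-Q)$, i.e. $w\mapsto\bar w$. Since each $B_{i,j}\sim\mathrm N(0,1)$ is symmetric, $\sigma$ preserves the probability measure; and as the index is the winding number of $w$, passing to $\bar w$ reverses it, so $\sigma$ carries the stratum $\{i=2\}$ bijectively and measure-preservingly onto $\{i=-2\}$. The crux is then the deterministic implication $i=-2\Rightarrow l=3$: setting $\phi(\theta)=\arg w(\theta)-\theta$, the zeros of $\mathrm{Im}(\bar z w)$ are the solutions of $\phi\equiv 0\pmod\pi$, of which there are at least $2|i-1|$ (from the net variation $2\pi(i-1)$ of $\phi$) and at most $6$ (since $\mathrm{Im}(\bar z w)$ is a trigonometric polynomial of degree $3$); for $i=-2$ both bounds equal $6$, forcing $l=3$. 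Therefore $\{i=-2\}=Q_1$ is a single class while $\{i=2\}=Q_3\sqcup Q_5$, and the measure-preserving bijection yields $P(Q_1)=P(Q_3)+P(Q_5)$. I expect this to be the main obstacle, as it requires controlling the index and the line count simultaneously and recognizing the correct symmetry; the same $\phi$-counting also reproduces $i\in\{-2,0,2\}$ and the splitting of the index-$0$ and index-$2$ strata into the other classes.

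Finally, the three relations leave exactly two free parameters among $P(Q_1),\dots,P(Q_5)$; these I would estimate by Monte Carlo sampling of \eqref{e:rhvf} with $\mathrm N(0,1)$ coefficients, classifying each sample through the computable pair $(i,l)$ via the disk-count of $g$ and the cubic above, which produces Table~\ref{t:taula2}.
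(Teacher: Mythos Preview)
Your argument is correct and rests on the same two pillars as the paper's proof—the measure-preserving involution $(P,Q)\mapsto(P,-Q)$ to get $P(i=2)=P(i=-2)$, and the Edelman--Kostlan identity $\mathbb E[l]=\Lambda_2$ (both packaged in the paper as Theorem~\ref{te:tot})—but the surrounding machinery is genuinely different. The paper characterizes the index algebraically via the Eisenbud--Levine signature formula (Theorem~\ref{t:teo15}) and simply reads the list of realized $(i,l)$ pairs off the known classification in~\cite{A,D}; you instead pass to complex coordinates, identify the index with $2(k-1)$ where $k$ counts roots of $\alpha v^2+\beta v+\gamma$ in the unit disk, and supply a direct topological proof of the implication $i=-2\Rightarrow l=3$ via the winding of $\phi(\theta)=\arg w-\theta$. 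Your route is more elementary and self-contained for $n=2$ and makes the constraint $(i,l)=(-2,1)$ impossible without appealing to the phase-portrait list, whereas the paper's algebraic apparatus (Eisenbud--Levine, Sturm sequences) is heavier but scales: it is reused verbatim for $n=3$ in Section~\ref{s:cubics}, where the complex-coordinate trick is less clean. Both proofs still lean on~\cite{A,D} for the final statement that $(i,l)$ is a \emph{complete} invariant on the generic stratum.
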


\begin{figure}[H]
    \begin{center}
        \includegraphics[scale=1]{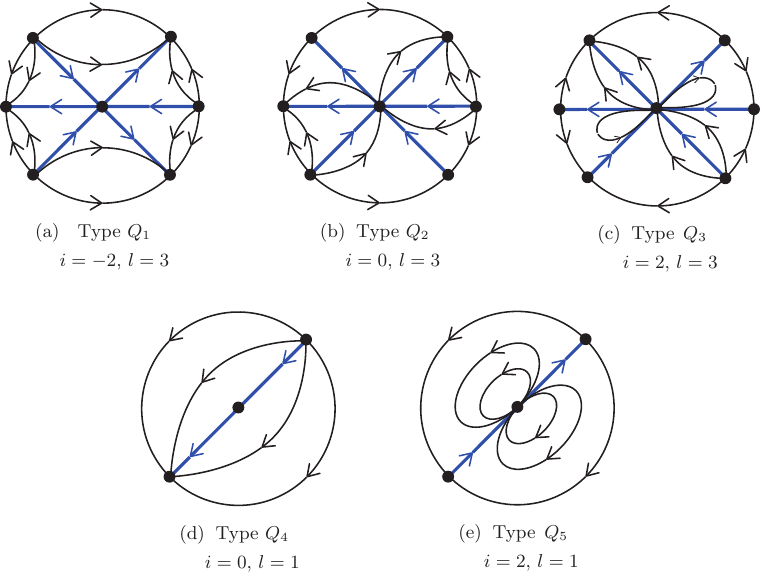}
        \caption{Phase portraits of planar quadratic homogeneous vector fields  with positive probability.}
        \label{f:dibuix2}
    \end{center}
\end{figure}

\begin{table}[htbp]
    \begin{center}
        \begin{tabular}{|l|c|}
            \hline
            \phantom & Estimated probabilities \\
            \hline $P(Q_1)$ & $0.11588$ \\ \hline$P(Q_2)$ & $0.18583$ \\
            \hline $P(Q_3)$ & $0.01999$ \\\hline $P(Q_4)$ &
            $0.58242$\\\hline $P(Q_5)$ & $0.09588$ \\\hline
        \end{tabular}
        \qquad
        \begin{tabular}{|l|c|}
            \hline
            \phantom & Observed frequency \\
            \hline $i(f)=-2$ & $0.11588$ \\
            \hline $i(f)=0$ & $0.76825$ \\
            \hline $i(f)=2$ & $0.11587$ \\ \hline
        \end{tabular}
    \end{center}
    \caption{Estimations of the probabilities $P(Q_j)$, $j=1,2,\ldots,5,$ of quadratic random vector fields, $F_2,$ of the
    form~\eqref{e:rhvf}, in accordance with Figure~\ref{f:dibuix2}.}
    \label{t:taula2}
\end{table}

To estimate the values $P(Q_j)$ we have applied the Monte Carlo
method to $10^8$   quadratic homogeneous vector fields, randomly
generated; that is, we have set up $10^8$ vectors in $\mathbb{R}^6$
whose entries are pseudo-random numbers simulating the six
independent random variables of the quadratic random vector field
$F_2,$ with $\mathrm{N}(0,1)$ distribution. This algorithm consists
in a repeated random sampling and gives estimations of the desired
probabilities due to the laws of large numbers and that  of the
iterated logarithm, see \cite{BFS,Morgan}. In almost all cases, the
corresponding phase portrait for the generated sample is obtained by
checking the sign of algebraic inequalities among the obtained
values of these six independent random variables. These signs allow
us to know the index of the origin and number of invariant straight
lines through it.

 It turns out that using $m$
samples, the Monte Carlo method gives the sought value with an
absolute error of order $O\big(((\log\log m)/m)^{1/2}\big),$ which
practically behaves as $O(m^{-1/2}).$ Since in our simulations we
take $m=10^8,$ the approaches found for the desired probabilities
will have an absolute error of order $10^{-4}.$ The results are
given in Table~\ref{t:taula2}. In the right-hand side of that table
the observed frequencies are also collected in terms of the index of
the critical point.

Notice that the  estimated probabilities of the phase portraits
$Q_1,Q_2,Q_3,Q_4$ and $Q_5$  are in good agreement
with the  relations given in Theorem~\ref{t:teoquad}.

Observe that precisely $P(Q_1)+P(Q_2)+P(Q_3)$ is the probability of
having three invariant straight lines and
$P(Q_4)+P(Q_5)=\frac{3-\Lambda_2}2\approx 0.67828$ is  the
probability of having one invariant straight line. These results are
also in good agreement with the ones of \cite{AL}. More in detail,
in that paper the authors give estimations of the probabilities of
the different phase portraits of structurally stable quadratic
vector fields, not necessarily homogeneous. It is easy to see that
structurally stable vector fields are  vector fields with full
probability. Their phase portraits  near infinity are related with
the ones  of homogeneous quadratic vector fields. In particular, the
number of critical points at the equator of the Poincar\'{e} disk is
twice  the number of invariant straight lines  of the corresponding
homogeneous quadratic vector field. In \cite{AL}, attending to the
behavior at infinity, ten different families are considered; the
first five having only a couple of diametrically opposed
singularities at infinite and the other ones having three pairs of
singularities. If, for the first five families in Table 2 in
\cite{AL}, we add their observed frequencies, also obtained by Monte
Carlo simulation, we get $0.25422+0.25424+0.07396+0.07749+0.01839=
0.6783.$ This value is in good agreement with the probability of
having one invariant line given above.

For $n=3,$ the results are similar to the quadratic case.

\begin{teo}\label{t:teocub}
      Consider cubic random vector fields $F_3$ of the form~\eqref{e:rhvf}.
   Their  phase portraits (modulus time orientation) with positive probability are the nine ones shown in
      Figure~\ref{f:dibuix3}. Except for the phase portraits $C_3$ and $C_4$, they are  determined
       by the couple  $(i,l)=$(index, number of invariant lines) at the origin.
    Moreover, their corresponding probabilities, in addition to $\sum_{j=1}^9 P(C_j)=1,$  satisfy
    \begin{align*}
&4\sum_{j=1}^5 P(C_j)+2\sum_{j=6}^8 P(C_j)=\Lambda_3,\quad
P(C_1)=P(C_5)+P(C_8),\\ &P(C_2)+P(C_6)=P(C_3)+P(C_4)+P(C_7)+P(C_9),
    \end{align*}
    and they are estimated in Table~\ref{t:taula3}.
\end{teo}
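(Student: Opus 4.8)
The plan is to follow the same strategy that must have been used for the quadratic case in Theorem~\ref{t:teoquad}, adapting it to degree three. The core object is the scalar function obtained by restricting the vector field to directions: writing $(x,y)=r(\cos\theta,\sin\theta)$, the number of invariant straight lines through the origin equals the number of roots in $\theta\in[0,\pi)$ of the homogeneous polynomial
\[
R_n(x,y)=xQ_n(x,y)-yP_n(x,y),
\]
which for $n=3$ is a binary form of degree $n+1=4$. Dividing by $x^4$ (working on the chart $y/x=t$) turns counting invariant lines into counting real roots of a random degree-$4$ polynomial in $t$ whose coefficients are fixed linear combinations of the $A_{i,j},B_{i,j}$, hence jointly Gaussian. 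The expected number of such real roots is exactly what the Edelman--Kostlan formula in \cite{EK} computes, and I expect the resulting Kac--Rice integrand to be precisely the one displayed in \eqref{eq:l3}, yielding the stated expected value $\Lambda_3$. First I would set up this dictionary carefully, accounting for the two points at infinity (the behaviour of $R_3$ at $x=0$) so that the root count over the projective line matches the number $l$ of invariant lines, and so that the factor relating $\Lambda_3$ to the probabilities is correctly normalized.

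The identity $4\sum_{j=1}^5 P(C_j)+2\sum_{j=6}^8 P(C_j)=\Lambda_3$ should then be read as the statement that $\Lambda_3$ is the expectation of $l$, decomposed over the phase-portrait classes: the portraits $C_1,\dots,C_5$ are exactly those with $l=4$ (four invariant lines, i.e.\ $R_3$ has four distinct real roots), the portraits $C_6,C_7,C_8$ are those with $l=2$, and $C_9$ is the one with $l=0$. Thus $\mathbb{E}(l)=4\,P(l=4)+2\,P(l=2)=4\sum_{j=1}^5 P(C_j)+2\sum_{j=6}^8 P(C_j)$, which must equal $\Lambda_3$. So the key step here is the combinatorial bookkeeping: I would go through Figure~\ref{f:dibuix3} and verify that the nine portraits partition cleanly according to $(i,l)$ (with $C_3,C_4$ sharing the same pair, as stated), confirming which classes carry four lines and which carry two.

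The two remaining symmetry relations I would prove by exhibiting measure-preserving involutions on the coefficient space that interchange the relevant classes. The law of the coefficients is invariant under the linear change $(x,y)\mapsto(x,-y)$ (a reflection), under $(x,y)\mapsto(y,x)$, and under swapping the roles of $P_n$ and $Q_n$; each of these induces a bijection of $\R^8$ that preserves the standard Gaussian measure because it merely permutes and sign-changes the i.i.d.\ coefficients. The relation $P(C_1)=P(C_5)+P(C_8)$ and the relation $P(C_2)+P(C_6)=P(C_3)+P(C_4)+P(C_7)+P(C_9)$ should each follow from identifying one such involution that maps the union of classes on the left onto the union on the right, up to a set of measure zero. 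The plan is to track, under the chosen involution, how the index $i$ at the origin and the configuration of invariant lines transform, and to match the resulting image classes.

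The main obstacle I anticipate is not the Gaussian computation of $\Lambda_3$—which is a direct, if lengthy, application of \cite{EK}—but rather the second stage: rigorously certifying that the nine listed portraits are the only ones with positive probability, and that they are distinguished by $(i,l)$ except for the degenerate coincidence at $C_3,C_4$. This requires the full algebraic classification of homogeneous cubic phase portraits (via \cite{A,CL,D}), translating each topological type into sign conditions on discriminants and resultants of $P_3,Q_3,R_3$, and then checking that exactly nine of these sign-condition cells have positive Lebesgue measure. The delicate point is separating $C_3$ from $C_4$: since they share $(i,l)$, some finer algebraic invariant (presumably the relative cyclic ordering of the invariant directions together with the local flow direction, i.e.\ whether consecutive hyperbolic sectors are arranged so as to produce distinct separatrix skeletons) must be isolated and shown to cut the corresponding $(i,l)$-cell into two pieces of positive measure. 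Establishing that this is the only such splitting, and that it indeed yields two genuinely non-homeomorphic portraits, is where I expect the real work to lie.
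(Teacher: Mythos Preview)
Your approach is essentially the one the paper takes, but you are making the symmetry step harder than it needs to be and slightly misplacing where the real work lies.

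For the relation $4\sum_{j=1}^5 P(C_j)+2\sum_{j=6}^8 P(C_j)=\Lambda_3$ you have it exactly right: this is $\mathbb{E}(l)$ decomposed over the partition $l\in\{4,2,0\}$, and $\Lambda_3$ is the Edelman--Kostlan integral already computed in Theorem~\ref{te:tot}(c). The paper does precisely this.

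For the two remaining identities, you propose to hunt for measure-preserving involutions (reflections in $(x,y)$, swapping $P$ and $Q$) and then track how both $i$ and $l$ transform. The paper's route is shorter: both identities are \emph{pure index relations}. Reading off Figure~\ref{f:dibuix3}, the index partitions the nine classes as
\[
i=-3:\{C_1\},\quad i=-1:\{C_2,C_6\},\quad i=1:\{C_3,C_4,C_7,C_9\},\quad i=3:\{C_5,C_8\}.
\]
Then $P(C_1)=P(C_5)+P(C_8)$ is just $u_3(-3)=u_3(3)$, and $P(C_2)+P(C_6)=P(C_3)+P(C_4)+P(C_7)+P(C_9)$ is just $u_3(-1)=u_3(1)$, both of which are the content of Theorem~\ref{te:tot}(a)(ii). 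The involution underlying that result is $(P,Q)\mapsto(P,-Q)$ (Lemma~\ref{l:lem1}), which flips the index; note that your candidate involutions $(x,y)\mapsto(x,-y)$ and $(x,y)\mapsto(y,x)$ conjugate the map and hence \emph{preserve} the index, so they would not yield these relations. You do not need to track $l$ at all here.

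Finally, you flag separating $C_3$ from $C_4$ as the crux. For the \emph{relations} in the theorem this is irrelevant: $C_3$ and $C_4$ only ever appear summed together (both have $(i,l)=(1,4)$), so the proof never needs to tell them apart. Distinguishing them is only required afterwards, for the Monte Carlo estimation in Table~\ref{t:taula3}, and the paper does it exactly as you guess: by the cyclic node/saddle pattern of the four singularities on the equator, detected via the signs of $-t_3'(\kappa_j)\,p_3(1,\kappa_j)$ at the ordered roots $\kappa_1<\kappa_2<\kappa_3<\kappa_4$ of $t_3$.
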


\begin{table}[H]
    \begin{center}
        \begin{tabular}{|l|c|}
            \hline
            \phantom & Estimated probabilities  \\
            \hline $P(C_1)$ & $0.00909$ \\
            \hline $P(C_2)$ & $0.04193$ \\
            \hline  $P(C_3)$ & $0.00615$ \\
            \hline  $P(C_4)$ &
            $0.02394$\\
            \hline  $P(C_5)$ & $0.00065$ \\
            \hline  $P(C_6)$ & $0.44897$\\
            \hline $P(C_7)$ & $0.28521$\\\hline
        $P(C_8)$ & $0.00845$\\\hline  $P(C_9)$ & $0.17561$\\
            \hline
        \end{tabular}\qquad
        \begin{tabular}{|c|c|}
            \hline
            \phantom & Observed frequencies \\
            \hline $i(f)=-3$ & $0.00909$ \\ \hline $i(f)=-1$ & $0.49090$ \\
            \hline $i(f)=1$ & $0.49091$ \\ \hline $i(f)=3$ &
            $0.00910$ \\ \hline\hline ${\bf 0}$ is a global attractor & $0.24238$\\
             \hline ${\bf 0}$ is a global repeller & $0.24238$\\
            \hline
        \end{tabular}
    \end{center}
\caption{Estimations of the probabilities $P(C_j)$,
$j=1,2,\ldots,9$,  of cubic random vector fields, $F_3,$ of the
    form~\eqref{e:rhvf}, in accordance with Figure~\ref{f:dibuix2}.} \label{t:taula3}
\end{table}

Again,  the estimated probabilities  given in Table~\ref{t:taula3} are
in good agreement with the relations given in
Theorem~\ref{t:teocub}.

\begin{nota}\label{re:temps}
    Notice that when  $n$ is even, all phase portraits of homogeneous vector fields are
     conjugated with the ones obtained reversing the orientation of all trajectories.
      This is so, because the associated differential equations are invariant with the
       change $(x,y,t)\rightarrow (-x,-y,-t).$ This is no more true when $n$ is odd. For instance,
    if we consider phase portraits $L_2$
    or $L_3$  in Figure~\ref{f:dibuix1}, or $C_2$ or $C_5$ in Figure~\ref{f:dibuix3}, and we reverse the arrows, the new
    phase portraits are not conjugated, but  topologically equivalent.
\end{nota}

\begin{figure}[H]
    \begin{center}
        \includegraphics[scale=1]{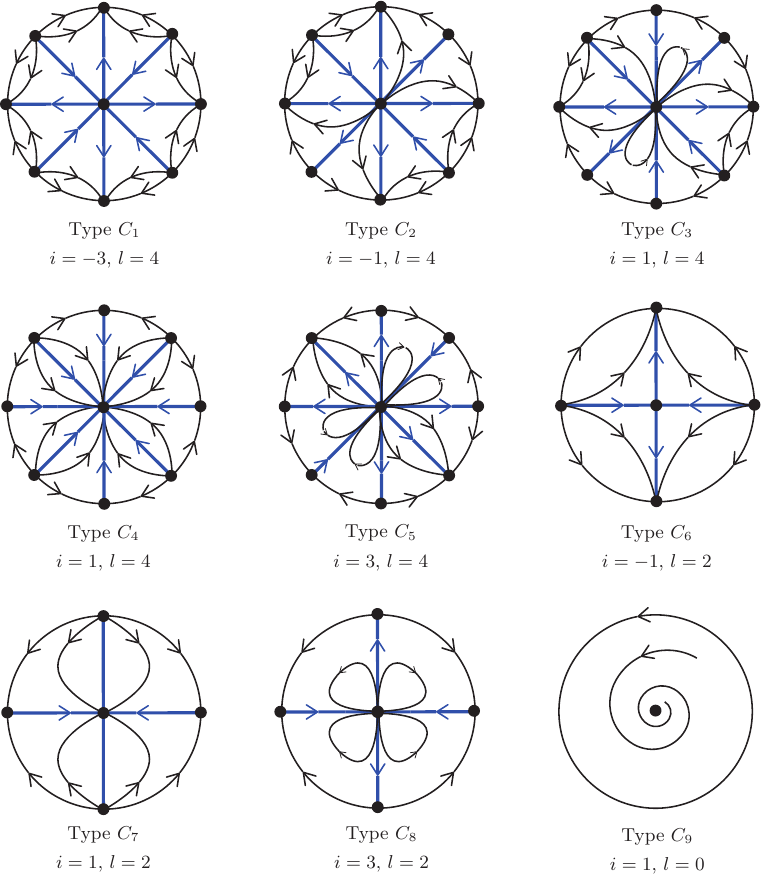}
        \caption{Phase portraits of planar cubic homogeneous vector fields  with positive probability,   modulus time orientation.}
        \label{f:dibuix3}
    \end{center}
\end{figure}

 Since the distribution of the coefficients of random planar homogeneous
  vector fields of degree $n$  is absolutely  continuous and with a positive density,
 the phase portraits that have positive probability coincide with the ones
  that are structurally stable, in the world of polynomial homogeneous
 vector fields od degree $n,$ see for instance~\cite{LPR} for the precise
  definitions. Fixed $n,$ the number of topologically equivalent different
 classes of phase portraits $S_n$ of such systems is given in Table~1  of that paper. The first ten values are reproduced in
 Table~\ref{t:1}. Observe that the value $S_2$ coincides with the number of different
  phase portraits given in Theorem~\ref{t:teoquad}.
 On the other hand, the values $S_1$ and $S_3$ are bigger than the corresponding ones of
   Theorems~\ref{t:teolineals} and~\ref{t:teocub}, respectively.
 This is so, because recall that by Remark~\ref{re:temps}, when~$n$ is even the phase portraits are
  invariant with respect the change $(x,y,t)\rightarrow(-x,-y,-t)$
 and when $n$ is odd this is no more true. In fact, changing  the direction of all
  the arrows, when $n=1,$ only one of the three obtained vector fields
 is topologically equivalent to itself and so, the three phase portraits of
  Theorem~\ref{t:teolineals} split  into $1+2\times2=5=S_1$ phase portraits.
 Similarly, for $n=3,$ only four of the  phase portraits of Theorem~\ref{t:teocub}
  remain invariant and as a consequence  we obtain $4+2\times5=14=S_3$ phase portraits.
 Finally,  notice also that by the invariance of the distribution of
the coefficients of vector field~\eqref{e:rhvf}, the probability of
a phase portrait and the one  obtained by changing the arrows
direction coincide. For instance, in the linear case, as a corollary
of Theorem~\ref{t:teolineals}, the probabilities of attracting node
is $(\sqrt{2}-1)/4$ and the one of repelling node is the same.

\begin{table}[h]
    \begin{center}
        \begin{tabular}{|c||c|c|c|c|c|c|c|c|c|c|}
            \hline
            $n$&1&2&3&4&5&6&7&8&9 &10 \\
            \hline $S_n$  &5&5&14&13&34&31&85&77&221&203 \\
            \hline
        \end{tabular}
    \end{center}
    \caption{Number of phase portraits with positive probability $S_n$ for vector field~\eqref{e:rhvf}.}\label{t:1}
\end{table}

We organize the paper as follows. In Section~\ref{s:homogeneous} we recall the main
 results of qualitative theory that allow to study the phase portraits of homogeneous
  vector fields together with the usual methods to know the multiplicity, the
   index $i$ and the number of invariant straight of planar vector fields, $l.$
    Section~\ref{s:probabilityspace}  collects all the probabilistic preliminaries and
    results that we will use. In particular we discuss the reason that justifies our
     definition of random planar  homogeneous vector field \eqref{e:rhvf}, as well as
      a tool  to study the expected number of real zeroes of random polynomials (\cite{EK}).
       Sections~\ref{ss:planarlinearsystems}, \ref{s:quad} and \ref{s:cubics} are devoted to
        prove Theorems~\ref{t:teolineals},~\ref{t:teoquad} and~\ref{t:teocub}, respectively.
        Finally in the Appendix we study the number and location of the real roots of polynomials
        of degree 3 and 4. As we will see, these results will be needed to study the couple $i$ and $l,$
         that we recall  gives the index of the origin and the number of invariant lines of the vector field through this point.

\section{Algebraic tools for the classification of the phase portraits}\label{s:homogeneous}

Consider planar polynomial vector fields of type
\begin{equation}\label{hom}
 f(x,y)=p_n(x,y)\frac{\partial}{\partial x}+q_n(x,y)
\frac{\partial}{\partial y},
\end{equation}
where $p_n(x,y)$ and $q_n(x,y)$ are real homogeneous polynomials of
degree $n.$ Their phase portraits on the Poincar\'{e} disk can be
obtained following the procedure detailed in~\cite{A}. For the cases
$n=2,3$ they are given in \cite{A,D}  and \cite{CL} respectively.
For $n=1,$ they correspond to the well-known linear homogeneous
vector fields.
 Since our objective
is to compute the probability of the different available phase
portraits  we need to characterize them in terms of algebraic
equalities and inequalities. As we will see, except in two cases that
will be explained in Section \ref{s:cubics}, the characterization of
their phase portraits occurring with positive probability can be
done by using only two objects: the index $i$ of the vector field
associated to \eqref{hom} at the origin and the number of invariant
straight lines $l,$ through this point.

We dedicate next two subsections first to recall how to know  the
number of invariant  straight lines  through the origin and secondly
to explain how to compute the index of an isolated critical point.
Since there are no essential differences, most results in this
second section deal with  $m$-dimensional vector fields.

\subsection{Invariant straight lines}\label{ss:invlines}

 It is
straightforward to see that the line $\alpha x+\beta y=0$ is
invariant by the flow of the homogeneous system~\eqref{hom}  if and
only if $\alpha x+\beta y$ is a factor of $x\,q_n(x,y)-y\,p_n(x,y).$
Due to the homogeneity we see that the slopes of these invariant
straight lines, different from $x=0$, are the values of
$\kappa\in\R$ that satisfy
\begin{equation}\label{e:Fdirecions}
t_n(\kappa):=q_n(1,\kappa)-\kappa\,p_n(1,\kappa)=0.
\end{equation} Hence, if $t_n(\kappa)\not\equiv0$ and
$p_n(0,y)\not\equiv 0$   the number of invariant straight lines $l\le
n+1$ is exactly the number of real zeros of $t_n(\kappa).$ If $t_n(\kappa)\equiv0$ or
$p_n(0,y)\equiv 0,$  the number of invariant straight lines is either
infinity or $l+1.$

Finally, we remark that the number of real roots of a general
polynomial of a fixed degree can be  characterized in terms of
algebraic inequalities among its coefficients. This fact can be
proved for instance by using Sturm sequences. See the Appendix for
the explicit results for polynomials of degree $3$ and $4.$

\subsection{Index at isolated singular points}\label{ss:index}

To simplify the notation,  we will do the following abuse of
notation: we  will write  $f=(f_1\ldots,f_m)$  to denote an analytic
map of $\mathbb{R}^m$ but also a finite map germ, or even  the
vector field $f=\sum_{j=1}^m f_j\partial/\partial x_j$. In all the
cases, the meaning will be explicitly stated or clearly deducible
from the context.  In this section we will assume that $f(0)=0,$
since we will always work with the singular point at the origin.

In general, if $f:(\R^m,0)\rightarrow (\R^m,0)$ is a
continuous map and $0$ is isolated in $f^{-1}(0),$ then \emph{the
index of $f$ at $0$}, $\mathrm{ind}(f),$ is defined as the degree of
the map  $f/||f||:{\mathbb
S}_{\epsilon}\rightarrow {\mathbb S}_{1},$ where ${\mathbb
S}_{\epsilon}$ is the boundary of a ball of radius $\epsilon$,  ${\mathbb B}_{\epsilon}$,
such that $f^{-1}(0)\bigcap {\mathbb B}_{\epsilon}=\{0\}.$ If $f$ is
differentiable, this number can be computed as the sum of the signs
of the Jacobian of $f$ at all the  preimages near $0$ of a regular
value of $f$ near $0,$  see \cite[Lemmas 3 and 4]{Milnor}.

If $f$ is a $C^\infty$ map we can consider the local
ring of germs of $C^{\infty}$ functions at the origin
$C_{\bf{0}}^{\infty}(\R^m)$ and the quotient ring
$$Q(f)=C_{\bf{0}}^{\infty}(\R^m)/(f),$$
where  $(f)$ denotes the ideal generated by the components of $f.$
It holds that when $\mathbf{0}$ is an isolated singularity then
$Q(f)$ is a finite dimensional real vector space and its dimension
is called the multiplicity of $f$ at 0, $\mu(f).$  In fact, when $f$
is polynomial this multiplicity  coincide with the number of complex
preimages of any regular value near 0.

As mentioned above, we want to determine the index by means of
 algebraic inequalities among the coefficients of our vector
fields. This can be done, for instance, by using the Eisenbud-Levine
signature formula for the index, see \cite{EL} or also \cite[Chap.
5]{AVG} for instance.

\begin{teo}[\cite{EL}]\label{te:EL}
Let $f:(\R^m,0)\rightarrow (\R^m,0)$ be a $C^\infty$ a finite map
germ, and let $\bar{J}\in Q(f)$ be the residue class of the Jacobian
of $f,$ $J=\det(\mathrm{D}f)$. If $\varphi:Q(f)\rightarrow \R$ is a
linear functional such that $\varphi(\bar{J})>0,$ and if $< ,
>\,=\,< ,
>_{\varphi}$ is the symmetric bilinear form on the ring $Q(f)$
defined by
$$<p ,q >=\varphi (pq) \mbox{ for } p,q\in Q(f),$$
then the index of $f$ at ${\bf{0}}$, is
$\mathrm{ind}(f)=\,\mathrm{signature}< , >.$
\end{teo}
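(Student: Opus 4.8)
The plan is to prove this by combining the Gorenstein duality of the local algebra with a deformation argument that reduces everything to the nondegenerate (Morse) case, where the formula is transparent. First I would reduce to the case where $f$ is polynomial: since $\mathbf 0$ is isolated in $f^{-1}(0)$, the ideal $(f)$ contains a power of the maximal ideal of $C_{\mathbf 0}^{\infty}(\R^m)$, so $f$ is finitely determined, and replacing $f$ by a high-order Taylor polynomial changes neither $\mathrm{ind}(f)$ nor the isomorphism class of $Q(f)$, nor the residue class $\bar J$. I would then establish the two structural facts on which everything rests: that $Q(f)$ is a finite-dimensional \emph{Gorenstein} algebra, and that its socle (the annihilator of the maximal ideal in $Q(f)$) is one-dimensional and spanned precisely by $\bar J$. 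Granting this, any functional $\varphi$ with $\varphi(\bar J)\neq 0$ is nonzero on the socle, and a standard argument shows that the pairing $\langle p,q\rangle=\varphi(pq)$ is then nondegenerate: if some $p$ lay in its radical, the ideal it generates would meet the socle in a nonzero element on which $\varphi$ vanishes, a contradiction. Thus the signature is well defined, and the hypothesis $\varphi(\bar J)>0$ merely fixes a sign.

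Next I would argue that the signature depends neither on the chosen $\varphi$ nor on $f$ up to deformation. Independence of $\varphi$ follows because the set of admissible functionals $\{\varphi:\varphi(\bar J)>0\}$ is an open half-space, hence convex and connected, and along any segment the form stays nondegenerate, so its signature, being a locally constant invariant of nondegenerate symmetric forms, cannot jump. For deformation invariance I would consider a family $f_s$ keeping the cluster of zeros near the origin isolated; the algebras $Q(f_s)$ have constant dimension $\mu(f)$ and the associated forms vary continuously and remain nondegenerate, so again the signature is constant. These two invariances let me compute the signature on a convenient representative.

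The decisive step is to deform $f$ to a generic nearby map $\tilde f=f-y$, with $y$ a regular value close to $0$, so that the zero at the origin splits into exactly $\mu(f)$ simple complex zeros $p_1,\dots,p_\mu$, some real and the rest occurring in complex-conjugate pairs. Here I would invoke a localization principle: the trace form on $Q(\tilde f)$ is the orthogonal direct sum of the corresponding forms on the local algebras at the individual $p_i$, with $\varphi$ splitting accordingly. At a real simple zero $p_i$ the local algebra is just $\R$, the class $\bar J$ reduces to $J(p_i)\neq 0$, and the one-dimensional form has signature $\sign J(p_i)$; at a conjugate pair the local algebra is $\C$ regarded as a $2$-dimensional real algebra, whose trace form is hyperbolic and contributes signature $0$. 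Summing, the signature equals $\sum_{\text{real }p_i}\sign J(p_i)$, which by the characterization of the degree as the signed count of preimages of a regular value (quoted in the excerpt via Milnor) is exactly $\mathrm{ind}(f)$.

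I expect the main obstacle to be this decisive step, namely proving that the trace form localizes as an orthogonal direct sum over the split zeros and that a complex-conjugate pair contributes zero signature; this is where the Gorenstein residue structure must be used in an essential way, compatibly with base change, and it is precisely the algebraic content that renders complex zeros invisible to the real degree. The companion input, establishing the Gorenstein property together with the identification of the socle with $\langle\bar J\rangle$, is the other genuinely nontrivial ingredient: both facts are classical, but neither is formal.
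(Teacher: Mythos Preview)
The paper does not prove this theorem at all: it is quoted verbatim from Eisenbud--Levine \cite{EL} (hence the bracketed citation in the theorem header) and used as a black box to compute indices of the quadratic and cubic homogeneous vector fields. So there is no ``paper's own proof'' to compare against.

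That said, your outline is a faithful sketch of the strategy actually carried out in \cite{EL} (and in the exposition in \cite{AVG}): reduce to the polynomial case by finite determinacy, use that $Q(f)$ is Gorenstein with one-dimensional socle generated by $\bar J$ to get nondegeneracy of the form and independence of $\varphi$ on the half-space $\{\varphi(\bar J)>0\}$, then deform to a Morse map and localize. One point to tighten: in your deformation step you write that ``the algebras $Q(f_s)$ have constant dimension $\mu(f)$'', but once the zero splits this is false for the \emph{local} algebra at the origin. What stays constant is the dimension of the \emph{semi-local} algebra $\bigoplus_i Q_{p_i}(\tilde f)$ over the cluster of nearby zeros, and it is on this semi-local algebra that the form decomposes orthogonally. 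You clearly have the right picture in the next paragraph, so this is a matter of phrasing rather than a real gap; just be careful not to conflate the two when you write it up. The genuinely hard inputs remain exactly the two you flag: the identification of the socle with $\langle\bar J\rangle$, and the compatibility of the residue pairing with the product decomposition after perturbation.
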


Since the signature of a quadratic form is the difference between
the number of positive eigenvalues and the number of  negative
 ones
of its associated matrix, we need to know  this number in terms of
algebraic inequalities among the coefficients of its characteristic
polynomial. This always can be done by using Sturm sequences
(\cite{SB}). Since we will study the index of the quadratic and
cubic planar homogeneous vector fields, we need to know the number
of real roots for polynomials of degree $3$ and $4$. For polynomials
of degree 3 we also need to know the number of positive and negative
real roots.  These characterizations are done in the Appendix.

We will also use the following simple properties of the index.

\begin{lem}\label{l:lem1}
Let $f:(\R^m,0)\rightarrow (\R^m,0),$  be a $C^\infty$ finite map
germ. Then:
\begin{enumerate}[(a)]
\item  If  $g=(f_1,f_2,\ldots, -f_m),$ then $\mathrm{ind}(f)=-\mathrm{ind}(g).$
\item  If  $g=(-f_1,-f_2,\ldots, -f_m)$ then $\mathrm{ind}(f)=(-1)^m\,\mathrm{ind}(g).$
\end{enumerate}
\end{lem}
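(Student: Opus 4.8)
The plan is to prove both statements directly from the definition of the index as the topological degree of the normalized map $f/\|f\|\colon \mathbb{S}_\epsilon \to \mathbb{S}_1$, using the multiplicativity of the degree under composition. The key observation is that in both cases $g$ is obtained from $f$ by post-composing with a linear isometry $L$ of $\R^m$: in part (a) with the reflection $L(y_1,\dots,y_m)=(y_1,\dots,y_{m-1},-y_m)$, and in part (b) with the antipodal map $L=-\mathrm{Id}$. Since every isometry preserves norms, $\|g(x)\|=\|f(x)\|$ for all $x$, and therefore
\[
\frac{g}{\|g\|}=L\circ\frac{f}{\|f\|}\colon \mathbb{S}_\epsilon\longrightarrow \mathbb{S}_1,
\]
where on the right $L$ is understood as restricted to the unit sphere $\mathbb{S}_1=\mathbb{S}^{m-1}$ (note that $L$ maps $\mathbb{S}_1$ to itself).

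Next I would invoke the two standard facts about degrees of orthogonal maps on $\mathbb{S}^{m-1}$: a reflection that negates a single coordinate has degree $-1$, and the antipodal map $x\mapsto -x$ has degree $(-1)^m$. Combining these with the multiplicativity $\grau(L\circ h)=\grau(L)\,\grau(h)$ yields at once $\mathrm{ind}(g)=-\mathrm{ind}(f)$ in case (a) and $\mathrm{ind}(g)=(-1)^m\mathrm{ind}(f)$ in case (b). Rearranging, and using $((-1)^m)^2=1$ in the second case, gives precisely the stated identities $\mathrm{ind}(f)=-\mathrm{ind}(g)$ and $\mathrm{ind}(f)=(-1)^m\mathrm{ind}(g)$.

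Alternatively, when $f$ is differentiable one can argue through the characterization recalled just after the definition (following \cite[Lemmas 3 and 4]{Milnor}), namely that $\mathrm{ind}(f)$ equals the sum of the signs of $J=\det(\mathrm{D}f)$ over the preimages near $0$ of a regular value. Writing $g=L\circ f$ we have $\mathrm{D}g=L\cdot \mathrm{D}f$, so $\det(\mathrm{D}g)=\det(L)\det(\mathrm{D}f)$, with $\det(L)=-1$ in (a) and $\det(L)=(-1)^m$ in (b); moreover the preimages of the regular value $L(v)$ of $g$ coincide with those of the regular value $v$ of $f$, since $g(x)=L(v)$ is equivalent to $f(x)=v$. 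Summing the signs then reproduces the same conclusion.

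I do not expect a serious obstacle here: the statement is elementary once one recognizes $g$ as the post-composition of $f$ with a linear isometry. The only point requiring a little care is the bookkeeping of the degree of the reflection and of the antipodal map on $\mathbb{S}^{m-1}$ (equivalently, the sign of $\det(L)$), together with the observation that post-composition by a norm-preserving map does not alter the normalization $f/\|f\|$ beyond composing with $L$ on the target sphere.
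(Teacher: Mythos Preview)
Your proof is correct, but it takes a genuinely different route from the paper's. The paper argues via the Eisenbud--Levine signature formula (Theorem~\ref{te:EL}): since $Q(f)=Q(g)$ and the Jacobian classes satisfy $\bar J_g=-\bar J_f$, one can take $\varphi_g=-\varphi_f$; the associated bilinear form then has matrix $B=-A$, so its eigenvalues are the negatives of those of $A$ and the signature changes sign. Part~(b) is handled ``similarly.'' Your argument instead works directly with the topological definition of the index as a degree, recognizing $g=L\circ f$ for a linear isometry $L$ and invoking multiplicativity of degree together with the standard values $\grau(\text{reflection})=-1$ and $\grau(-\mathrm{Id})=(-1)^m$ on $\mathbb{S}^{m-1}$. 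Your approach is more elementary and self-contained (it does not require the algebraic machinery of $Q(f)$ or the signature formula), while the paper's approach is natural in context since Theorem~\ref{te:EL} is their main computational tool throughout. Both arguments are short; yours has the mild advantage of making part~(b) just as immediate as part~(a), whereas the paper leaves~(b) to the reader.
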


\begin{proof} $(a)$ The proof is straightforward by using the algebraic formula for the
index given in Theorem \ref{te:EL}. Clearly $Q(f)=Q(g)$ and the
Jacobian  determinant of $g,\, J_g$, is minus the Jacobian
determinant of $f, \,J_f.$ Now consider $\varphi_f:Q(f)\rightarrow
\R$ a linear functional such that $\varphi_f(\bar{J_f})>0,$ and let
$A$ be the associated matrix. Then, if we define
$\varphi_g(p)=-\varphi_f(p),$ $\varphi_g$ is a linear functional
which satisfies that $\varphi_g(\bar{J_g})>0,$ and its corresponding
matrix $B$ is $B=-A.$ This fact implies that $P_B(\lambda)=0$ if and
only if $P_A(-\lambda)=0,$ where $P_A$ and $P_B$ are the
characteristic polynomials of $A$ and $B$ respectively. Hence the
signature of $<
>_{\varphi_g}$ is minus the signature of $<
>_{\varphi_f}.$

$(b)$ This proof follows similarly.
\end{proof}

Finally, it is also known,  see \cite{CGT}, that
$$|\mathrm{ind}(f)|\le (\mu(f))^{1-\frac1m}\, \mbox{ and }\,\mathrm{ind}(f)\equiv
\mu(f)\,\,(\text{mod}\,\, 2).$$

\medskip

Applying the above results to \eqref{hom} we have:

\begin{corol}\label{co:homo} Let  $f$ be  a homogeneous planar vector field
\eqref{hom} of degree $n.$ Assume that the origin is an isolated
singularity. Then $\mu(f)=n^2$ and
\[
|\mathrm{ind}(f)|\le n \, \mbox{ with }\,\mathrm{ind}(f)\equiv
n\,\,(\mathrm{mod}\,\, 2).
\]
In particular, for $n=1$ the index $i\in\{-1,1\},$ for $n=2$ the
index $i\in\{-2,0,2\}$ and for $n=3$ the index $i\in\{-3,-1,1,3\}.$
\end{corol}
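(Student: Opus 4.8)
The plan is to establish Corollary~\ref{co:homo} by specializing the two general facts recalled just above it---namely the multiplicity bound and the index congruence from \cite{CGT}---to the homogeneous case, so the only genuine work is to compute $\mu(f)=n^2$ for a homogeneous vector field with isolated singularity at the origin. First I would invoke the algebraic characterization of multiplicity stated earlier: when $f$ is polynomial, $\mu(f)=\dim_\R Q(f)$ equals the number of complex preimages of a generic regular value near $\mathbf 0$. Equivalently, $\mu(f)$ is the intersection multiplicity at the origin of the two curves $p_n(x,y)=0$ and $q_n(x,y)=0$.

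The key computation is that two homogeneous plane curves of degrees $n$ and $n$ meet at the origin with intersection multiplicity exactly $n^2$, provided the origin is an isolated common zero. I would argue this via B\'ezout's theorem in the projective plane: the projective closures of $\{p_n=0\}$ and $\{q_n=0\}$ are unions of $n$ lines through the origin each (counted with multiplicity, over $\C$), and since the origin is an isolated intersection point the two curves share no common line, so they have no common component and B\'ezout applies, giving total intersection number $n\cdot n=n^2$ counted over all of $\pr$. The homogeneity forces every intersection to lie at the origin $[0:0:1]$ (the only affine solution is $(0,0)$, and the points at infinity correspond to common real or complex slopes, which would force a common linear factor, contradicting isolatedness); hence the whole $n^2$ is concentrated at the origin, yielding $\mu(f)=n^2$. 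Alternatively, one can see $\dim_\R Q(f)=n^2$ directly: for a homogeneous $f$ the ideal $(p_n,q_n)$ is generated in degree $n$, the associated graded/Hilbert-series argument shows the quotient has the same dimension as that of a generic pair, which is $n^2$.

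With $\mu(f)=n^2$ in hand, both conclusions are immediate substitutions. The bound $|\mathrm{ind}(f)|\le(\mu(f))^{1-1/m}$ with $m=2$ becomes $|\mathrm{ind}(f)|\le(n^2)^{1/2}=n$, and the congruence $\mathrm{ind}(f)\equiv\mu(f)\ (\mathrm{mod}\ 2)$ becomes $\mathrm{ind}(f)\equiv n^2\equiv n\ (\mathrm{mod}\ 2)$, since $n^2$ and $n$ have the same parity. The final sentence of the statement is then just the enumeration of the integers in $[-n,n]$ having the parity of $n$: for $n=1$ one gets $\{-1,1\}$, for $n=2$ one gets $\{-2,0,2\}$, and for $n=3$ one gets $\{-3,-1,1,3\}$, which I would state without further comment.

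The main obstacle is the multiplicity computation, and specifically justifying that the full B\'ezout number localizes at the origin rather than leaking to points at infinity. The care needed is precisely the hypothesis that the origin is isolated in $f^{-1}(0)$: this must be translated into the statement that $p_n$ and $q_n$ have no common linear factor over $\C$, equivalently that $\mathrm{Res}_y(p_n(1,y),q_n(1,y))$ together with the leading-coefficient condition is nonzero, so that the only projective intersection point is $[0:0:1]$ and it absorbs all $n^2$ intersections. Once this no-common-factor reformulation of isolatedness is cleanly stated, the rest is a routine appeal to B\'ezout and to the already-cited inequality and congruence.
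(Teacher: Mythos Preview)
Your proposal is correct and follows exactly the route the paper takes: the corollary is stated immediately after the inequality and congruence from \cite{CGT} with the words ``Applying the above results to \eqref{hom} we have,'' and no separate proof is given. The paper treats $\mu(f)=n^2$ as evident (it is re-asserted without argument at the openings of Sections~\ref{s:quad} and~\ref{s:cubics}); your B\'ezout localization argument simply fills in what the paper leaves implicit, and the deductions $|\mathrm{ind}(f)|\le (n^2)^{1/2}=n$ and $\mathrm{ind}(f)\equiv n^2\equiv n\pmod 2$ are exactly the intended specializations.

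One small caution about your last paragraph: the ``reformulation'' of real isolatedness as the absence of a common linear factor over~$\C$ is only one implication when $n\ge 2$. Two real homogeneous forms can share a conjugate pair of complex linear factors---equivalently a common positive-definite real quadratic factor---and still have $\mathbf 0$ as their only common \emph{real} zero; in that situation $Q(f)$ is infinite-dimensional and $\mu(f)$ is not $n^2$. So, read literally, the corollary needs the finite-map-germ hypothesis (no common factor) rather than mere real isolatedness. The paper makes the same elision a few lines earlier when it asserts that an isolated singularity forces $Q(f)$ to be finite-dimensional; in the random setting of the paper this degenerate case has probability zero, so nothing downstream is affected, but it is worth stating the hypothesis precisely.
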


For $m=2,$ there is an alternative way to compute the index of an
isolated singularity of an analytic planar vector field $f.$ The
Bendixon's index formula says that $\mathrm{ind}(f)=1+(e-h)/2,$
where $e$ and $h$  are respectively the number of elliptic and
hyperbolic sectors of this singularity.

\section{Probabilistic tools for the study of the phase portraits}\label{s:probabilityspace}

In this section we start motivating our definition of random homogeneous vector field \eqref{e:rhvf}.
We also prove Theorem~\ref{te:tot} that contains relationships among several probabilities of the phase
portraits of~\eqref{e:rhvf} with positive probability.


Consider families of vector fields in the plane that are written as
\begin{equation}\label{e:firsteq}
Y=A_1\,Y_1+A_2\,Y_2+\cdots +A_k\,Y_k,
\end{equation}
where $Y_1,Y_2,\ldots,Y_k$ are fixed vector fields on $\R^2$ and
$A_1,A_2,\ldots,A_k$ are random variables to be fixed, taking values on $\R.$ In
this way each event $\omega$ consists on a given vector field
\begin{equation}\label{e:firsteq2}
Y=a_1\,Y_1+a_2\,Y_2+\cdots +a_k\,Y_k,
\end{equation}
with $a_j=A_j(\omega)$ (notice that in the whole paper we will use capital letters to denote the
coefficients of vector fields or polynomials when they are random variables, and lowercase letters
when they are real numbers). In order to give a measure of the different
phase portraits a fundamental issue is to determine which is the
natural election of distribution of the random variables $A_j$. Only
after this step is properly  done we can ask for the probabilities of some
dynamical features.

It is natural to assume that the random variables $A_j$ are
\emph{continuous}, \emph{independent} and \emph{identically
distributed} on $\R$.  The \emph{principle of indifference}
\cite{Conrad} would seem to indicate that we should take a
distribution for the variables $A_j$ in such a way that the vector
$(A_1,\ldots,A_k)$ had some kind of uniform distribution in
$\mathbb{R}^k$. But there is no uniform distribution for unbounded
probability spaces. However notice that any phase portrait is
invariant under positive linear time scalings. Hence all systems
\eqref{e:firsteq2} with parameters $(\lambda a_1,\ldots,\lambda
a_k)$ with $\lambda>0$ have topologically equivalent phase
portraits. Thus it is also natural to consider a  distribution  for
the coefficients $A_j$ such that the random vector $
\left({A_1}/{S},{A_2}/{S},\ldots,{A_k}/{S}\right),$ where
$S=\sqrt{\sum_{j=1}^kA_j^2},$ has a uniform distribution on the sphere
${\mathbb S}^{k-1} \subset \R^k$. Next result will justify our
choice of distribution for the random variables $A_j.$

\begin{teo} (\cite{CGM, Mars,Mull})
    Let $X_1,X_2,\ldots,X_k$ be independent and identically distributed one-dimensional
    random variables with a continuous positive density function $f$. The random
    vector
    $
    X=\left({X_1}/{S},{X_2}/{S},\ldots,{X_k}/{S}\right),
    $
    where $S=\sqrt{\sum_{j=1}^{k} X_j^2}$, has a uniform distribution in
    $\mathbb{S}^{k-1}\subset\mathbb{R}^k$ if and only if all $X_j$ are normal
    random variables with zero mean.
\end{teo}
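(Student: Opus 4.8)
The statement is the classical Herschel--Maxwell characterization of the Gaussian, so the plan is to prove the two implications separately, noting first that it only has content for $k\ge 2$: for $k=1$ the sphere ${\mathbb S}^{0}$ is two points and ``uniformity'' merely says the density is symmetric, which does not force normality. The easy implication is that a centered normal law gives a uniform direction. If each $X_j\sim\mathrm{N}(0,\sigma^2)$, the joint density of $(X_1,\dots,X_k)$ is proportional to $\exp(-|x|^2/(2\sigma^2))$, which depends only on $|x|$. Hence its push-forward under $x\mapsto x/|x|$ has angular density proportional to $\int_0^\infty e^{-r^2/(2\sigma^2)}r^{k-1}\,\dr r$, a constant independent of the direction, so $X/S$ is uniform on ${\mathbb S}^{k-1}$. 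This costs only one line.

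For the converse I would turn uniformity into an integral equation for $f$. Writing the joint density as $\prod_j f(x_j)$ and passing to polar coordinates, uniformity of $X/S$ is equivalent to
\[
\int_0^\infty \prod_{j=1}^k f(r\theta_j)\,r^{k-1}\,\dr r = C\qquad\text{for every }\theta\in{\mathbb S}^{k-1}.
\]
Choosing $\theta$ in a coordinate $2$-plane, $\theta\propto(1,m,0,\dots,0)$ (here I use that $f$ is continuous and positive, so the remaining $k-2$ factors contribute the constant $f(0)^{k-2}$) and reparametrizing the ray by its first coordinate, this reduces to the one-variable identity $\dps\int_0^\infty f(x)f(mx)\,x^{k-1}\,\dr x = C'(1+m^2)^{-k/2}$ for all $m>0$. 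The decisive step is to take the Mellin transform in $m$: with $M(s)=\int_0^\infty f(x)x^{s-1}\,\dr x$ the left-hand side becomes $M(s)M(k-s)$, while the Mellin transform of $(1+m^2)^{-k/2}$ is $\tfrac12\Gamma(s/2)\Gamma((k-s)/2)/\Gamma(k/2)$. This yields the functional equation
\[
M(s)\,M(k-s)=D\,\Gamma(s/2)\,\Gamma\!\Big(\tfrac{k-s}{2}\Big),\qquad 0<\re(s)<k,
\]
and one checks directly that $M(s)=\tfrac{c}{2}\,\lambda^{-s/2}\Gamma(s/2)$, i.e. $f(x)=c\,e^{-\lambda x^2}$, is a solution. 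Repeating the argument in the remaining mixed-sign coordinate planes forces the parameters obtained on the two half-lines to agree, so that $f$ is a single centered Gaussian, with $\lambda>0$ forced by integrability.

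The main obstacle is uniqueness. Setting $N(s)=M(s)/\Gamma(s/2)$ the equation reads $N(s)N(k-s)=D$, which is solved not only by the Gaussian (where $N$ is exponential, $N(s)=\text{const}\cdot\lambda^{-s/2}$) but by any $N=\sqrt{D}\,e^{h}$ with $h(s)+h(k-s)=0$; one must rule out every such antisymmetric factor except the affine ones, which only reparametrize the scale of the Gaussian. I would close this using that $M$ is the Mellin transform of a positive integrable density: the values $M(n+1)=\int_0^\infty x^{n}f(x)\,\dr x$ are the moments of $f$ on the half-line and form a determinate moment sequence, and together with the admissible growth and analyticity of $M$ this forces $h$ to be affine, leaving only the Gaussian scale family. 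A more classical alternative that avoids the moment-determinacy analysis is to first upgrade uniformity of the direction to full rotational invariance of $\prod_j f(x_j)$ and then run the Herschel--Maxwell argument: taking logarithms converts radial symmetry plus independence into Cauchy's equation $\sum_j\psi(x_j^2)=\Psi(\sum_j x_j^2)$, whose continuous solutions are linear and give $f(x)=c\,e^{-\lambda x^2}$ at once; in that variant the hard point is instead the reduction to rotational invariance.
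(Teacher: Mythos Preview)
The paper does not prove this theorem; it is quoted from \cite{CGM,Mars,Mull} as motivation for the choice of distribution and is used without argument, so there is no in-paper proof to compare your attempt against.

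On the merits: your forward direction is correct and standard, and your observation that the statement is vacuous for $k=1$ is apt. The converse, however, has a genuine gap precisely where you locate it. From $N(s)N(k-s)=D$ you correctly note that every $N=\sqrt{D}\,e^{h}$ with $h(s)+h(k-s)=0$ is a solution, and the entire difficulty is to exclude non-affine $h$. Your appeal to moment determinacy does not close this. First, you have not shown that $f$ has all moments: the integral identity yields finiteness of $\int_0^\infty f(x)\,x^{k-1}\,\dr x$ (let $m\to 0$), but not of $M(s)$ for arbitrary $\re(s)>0$, so the Mellin transform may not even be defined in the full strip you use. Second, even if all moments exist, the Stieltjes problem on $(0,\infty)$ is not automatically determinate---the log-normal density is the standard counterexample---so ``the moments form a determinate sequence'' is an assumption, not a consequence. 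Third, and most importantly, you do not explain \emph{how} determinacy of the moments of $f$ would force $h$ to be affine: the relation $N(s)N(k-s)=D$ links two values of $M$ and by itself encodes neither the positivity of $f$ nor the Hankel inequalities that underlie determinacy. The ``classical alternative'' you mention is indeed the route taken in the cited references, but, as you say yourself, its entire content is the passage from uniformity of $X/S$ to rotational invariance of $\prod_j f(x_j)$, and you leave that step open as well. As written, the argument exhibits the Gaussian as a solution but does not establish that it is the only one; to complete the converse you would need either a genuine uniqueness theorem for the Mellin functional equation (for instance via growth bounds on $M$ in the strip sharp enough to run a Phragm\'en--Lindel\"of argument on $h$, which you have not derived) or a full proof of the rotational-invariance reduction.
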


Hence, in the random vector field \eqref{e:firsteq} we consider the
probability space $(\Omega,\mathcal{F},P)$ where $\Omega=\R^k$,
$\mathcal{F}$ is the $\sigma$-algebra generated by the open sets of
$\R^k$ and $P:\mathcal{F}\rightarrow [0,1]$ is the probability with
joint density
\begin{equation}\label{E:densityf}
\psi(a_1,a_2,\ldots,a_k)\,=\,\frac{1}{\left(\sqrt{2\pi}\right)^k}\,\mathrm{e}^{-\frac{a_1^2+a_2^2+\ldots
+a_k^2}{2}}.\end{equation} Notice that for simplicity it is not
restrictive to consider that the variance for the centered normal
random variables is 1.

The above explanation justifies the definition of random homogeneous
vector field given in \eqref{e:rhvf}. Notice that they are under the
formulation of \eqref{e:firsteq} with $k=2n+2$ and each $Y_i$ either as $x^j y^{n-j}\frac{\partial}{\partial x}$ or
$x^m y^{n-m}\frac{\partial}{\partial y},$ for some $j$ and $m$ with $0\le j\le n,$ and $0\le m\le n.$

\begin{nota}\label{p:measurable} Observe that the probability density function associated
to \eqref{e:rhvf} is positive. Therefore,  any non-empty event
described by algebraic inequalities is measurable and has positive
probability. Similarly, the measurable events such  that in their
description appears a non trivial  algebraic equality have
probability zero.
\end{nota}

Next result collects most of the information needed to prove our
main results.

\begin{teo}\label{te:tot}
\begin{enumerate}[(a)]
\item Set $u_n(k)=P(\mbox{\rm the index of } \eqref{e:rhvf}\, \mbox{\rm  at }  {\bf{0}}\, \mbox{\rm  is } k).$ Then:
\begin{enumerate}[(i)]
\item  $u_n(k)\ne0$ if and only if  $|k|\le n$  and
$k\equiv n \, (\mathrm{mod}\, 2),$
\item $u_n(k)=u_n(-k)$ and as a consequence the expected index of the random vector field~\eqref{e:rhvf} is
$\sum_{|k|\le n} k u_n(k)=0.$
\end{enumerate}
\item Set $a_n=P( \bf{0}\,\,  \mbox{\rm is a global attractor for } \eqref{e:rhvf} )$ and $r_n=
P( \bf{0} \, \,\mbox{\rm is a global repeller for }
\eqref{e:rhvf}).$ Then $a_n=r_n.$ Moreover $a_n\ne0$ if and only if
$n$ is odd.

\item Set $\,\ell_n(k)=P(\mbox{\rm vector field } \eqref{e:rhvf}\,\, \mbox{\rm has exactly }  k\,\, \mbox{\rm invariant straight lines}).$
Then $\ell_n(k)\ne0$ if and only if  $k\le n+1$  and
$k\equiv n+1 \, (\mathrm{mod}\, 2).$ Moreover the expected number of
invariant straight lines is $\Lambda_n= \sum_{k=0}^{n+1} k
\ell_n(k).$ In particular,  $\Lambda_1=\sqrt{2}$
      and $\Lambda_2$ and $\Lambda_3$ are given in   \eqref{eq:l2} and \eqref{eq:l3}, respectively.
\end{enumerate}
\end{teo}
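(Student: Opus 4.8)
The plan is to treat the three items in turn, using throughout that the density~\eqref{E:densityf} is everywhere positive, so that (Remark~\ref{p:measurable}) every nonempty event cut out by strict algebraic inequalities has positive probability while every event containing a nontrivial algebraic equality has probability zero.

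For item $(a)$, the bound $|k|\le n$ and the congruence $k\equiv n\pmod{2}$ are exactly Corollary~\ref{co:homo}. To see that each admissible $k$ is attained with positive probability I would exhibit an explicit homogeneous field: writing $z=x+iy$, the field whose complex form is $z^a\bar z^b$ with $a+b=n$ has an isolated singularity at the origin of index $a-b$, and as $(a,b)$ runs over the nonnegative pairs with $a+b=n$ the value $a-b$ runs over $\{-n,-n+2,\dots,n\}$. Since the index is locally constant on the full-measure set where the origin is isolated, each such example lies in an open set of coefficients, whence $u_n(k)>0$. For the symmetry $u_n(k)=u_n(-k)$ I would use Lemma~\ref{l:lem1}$(a)$: replacing $Q_n$ by $-Q_n$ negates the index, while negating all the $B_{i,j}\sim\mathrm{N}(0,1)$ leaves the joint law of~\eqref{e:rhvf} unchanged, so the events ``index $=k$'' and ``index $=-k$'' are equiprobable. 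The vanishing of the expected index then follows by pairing $k$ with $-k$.

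For item $(b)$, the equality $a_n=r_n$ comes from the same idea: the origin is a global attractor for $f$ if and only if it is a global repeller for $-f$, and $f\mapsto -f$ negates all coefficients, which preserves the law of~\eqref{e:rhvf}. For the dichotomy I would argue separately. When $n$ is even, Remark~\ref{re:temps} shows that $f$ is topologically conjugate to its time reversal through the central symmetry $\sigma(x,y)=(-x,-y)$; a conjugacy carries a global attractor to a global attractor, so if the origin were a global attractor for $f$ it would also be one for the time-reversed field, i.e.\ a global repeller for $f$, which is impossible. Hence $a_n=0$ for even $n$. When $n$ is odd I would exhibit the radial field $p_n=-x(x^2+y^2)^{(n-1)/2}$, $q_n=-y(x^2+y^2)^{(n-1)/2}$, for which $\dot r=-r^{n}<0$ along every trajectory, so the origin is a global attractor; since the radial component stays negative under small perturbations of the coefficients, this is an open condition and $a_n>0$.

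For item $(c)$, recall from Section~\ref{ss:invlines} that the invariant lines correspond to the real linear factors of the homogeneous form $H:=x\,Q_n-y\,P_n$ of degree $n+1$, and that almost surely $H$ has full degree, is squarefree, and is not divisible by $x$ (each failure is a nontrivial algebraic equality). Then the real directions among the $n+1$ points of $\{H=0\}$ in $\mathbb{CP}^1$ pair off the nonreal ones in conjugate couples, so their number $k$ satisfies $k\le n+1$ and $k\equiv n+1\pmod{2}$; conversely, since $(p_n,q_n)\mapsto x q_n-y p_n$ is surjective onto degree-$(n+1)$ forms, any prescribed squarefree $H$ with exactly $k$ simple real factors is realized on an open set of coefficients, giving $\ell_n(k)>0$. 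For the expectation, almost surely there is no vertical invariant line, so the number of invariant lines equals the number of real roots of $t_n(\kappa)=\sum_{j=0}^{n+1}c_j\kappa^{j}$ from~\eqref{e:Fdirecions}, whose coefficients are independent Gaussians with $c_0,c_{n+1}\sim\mathrm{N}(0,1)$ and $c_j\sim\mathrm{N}(0,2)$ for $1\le j\le n$. I would then apply the Edelman--Kostlan formula~\cite{EK}: with $w(\kappa)=(\sigma_0,\sigma_1\kappa,\dots,\sigma_{n+1}\kappa^{n+1})$ the expected number of real roots is $\int_{-\infty}^{\infty}\rho(\kappa)\,\dr\kappa$, where
\[
\rho(\kappa)=\frac1\pi\,\frac{\sqrt{\|w\|^2\|w'\|^2-(w\cdot w')^2}}{\|w\|^2}.
\]
A direct computation gives $\|w\|^2=(\kappa^2+1)\sum_{j=0}^{n}\kappa^{2j}$, which for $n=2$ and $n=3$ reproduces exactly the denominators in~\eqref{eq:l2} and~\eqref{eq:l3}; evaluating the numerator and simplifying yields the stated integrands, and for $n=1$ the integral equals $\sqrt{2}$.

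The routine parts are the explicit examples and the symmetry arguments. The real work---and the main obstacle---is item $(c)$: carrying out the Edelman--Kostlan computation cleanly, i.e.\ identifying the variances $1,2,\dots,2,1$, computing $\|w\|^2\|w'\|^2-(w\cdot w')^2$, and simplifying the resulting rational integrand to the closed forms~\eqref{eq:l2}--\eqref{eq:l3}; the value $\sqrt{2}$ for $n=1$ provides a useful consistency check.
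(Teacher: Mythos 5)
Your proposal is correct, and on the two computational pillars it coincides with the paper's own proof: the symmetry arguments (item $(a)(ii)$ via Lemma~\ref{l:lem1}$(a)$ combined with $B_{i,j}\sim-B_{i,j}$, and $a_n=r_n$ via invariance of the law under $f\mapsto-f$) and the Edelman--Kostlan computation of $\Lambda_n$ for $t_n$ with independent Gaussian coefficients of variances $1,2,\dots,2,1$ are exactly the paper's steps (your density $\frac{1}{\pi}\sqrt{\|w\|^2\|w'\|^2-(w\cdot w')^2}/\|w\|^2$ is just $\frac1\pi\|\mathbf{w}'\|$ written out). Where you diverge, you are mostly \emph{more} complete than the paper: its proof of $(a)(i)$ only invokes Corollary~\ref{co:homo}, which gives the necessity of $|k|\le n$, $k\equiv n\ (\mathrm{mod}\ 2)$, but never exhibits, for each admissible $k$, an open set of fields realizing it; your examples $z^a\bar z^b$ (index $a-b$, isolated zero, index locally constant under perturbation) close that half. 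The same goes for the realizability half of $(c)$ (surjectivity of $(p_n,q_n)\mapsto xq_n-yp_n$ plus openness of ``exactly $k$ simple real roots''), on which the paper is silent, and for the odd case of $(b)$, where the paper merely says an open set of examples ``is not difficult to find'' while you produce the radial field with $\dot r=-r^n$ and the compactness argument for openness.

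The one soft spot is the even case of item $(b)$. The paper's route is Bendixson's formula: a global attractor admits neither elliptic nor hyperbolic sectors, so $\mathrm{ind}(f)=1$, and Corollary~\ref{co:homo} then forces $n$ odd. Your route via Remark~\ref{re:temps} (for $n$ even the central symmetry conjugates $f$ with its time reversal, so a global attractor would also be a global repeller) is legitimate, but it ends with the unproved assertion that being simultaneously a global attractor and a global repeller ``is impossible''. That is not a local triviality: orbits in an elliptic sector tend to ${\bf 0}$ in both time directions, so the impossibility is a genuinely global fact and needs a line of proof. For homogeneous fields it is easy to supply: for $n$ even, $t_n$ has odd degree $n+1$, hence a real root, so there is an invariant line; writing $f(v)=cv$ on its unit direction (with $c\neq0$ since ${\bf 0}$ is isolated) and using $f(-v)=f(v)$ for $n$ even, one of the two rays satisfies $\dot r=|c|r^n>0$, i.e.\ carries an orbit escaping in forward time --- which already contradicts global attraction and in fact makes the conjugacy argument unnecessary. (Alternatively, with no invariant line the Poincar\'e return map on a ray is linear by homogeneity and cannot contract in both time directions.) Add such a line and your $(b)$ is complete. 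A last cosmetic remark: your claim that $\|w\|^2$ ``reproduces exactly'' the denominators of \eqref{eq:l2} and \eqref{eq:l3} is literally true only for $n=2$; for $n=3$ the denominator in \eqref{eq:l3} is $\|w\|^2/(1+\kappa^2)$, a factor $1+\kappa^2$ having been cancelled against the numerator, which your ``and simplifying'' covers but should be stated.
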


\begin{proof}  First observe that  from the results of \cite{A} and
    \cite{EL}  the events we are interested in are measurable, because
    they are defined by algebraic inequalities (see next sections to
    have more details of how to characterize these events).
Observe that by Remark \ref{p:measurable} the probability of
$P_n(x,y)$ and $Q_n(x,y)$ to have a common factor is zero since this
fact is characterized by an algebraic equality among the
coefficients of $P_n$ and $Q_n$ (as can be seen taking successive
resultants).  Hence we can always assume that the origin is
isolated. In particular, we are under the hypotheses of Corollary
\ref{co:homo}, and therefore the statement $(i)$ of item $(a)$
follows. To prove item $(ii)$ consider the associated  random vector
field $G_n(x,y)=P_n(x,y)\frac{\partial}{\partial x}-Q_n(x,y)
    \frac{\partial}{\partial y}$. Observe that
    $P(\mathrm{ind}(F_n)=k)\,=\,P(\mathrm{ind}(G_n)=k)$ because, due to their
    symmetry,  the variables $B_{i,j}\sim-B_{i,j}\sim \mathrm{N}(0,1)$.  From
    Lemma \ref{l:lem1} $(a)$ we have $\mathrm{ind}(G_n)=-\mathrm{ind}(F_n),$ and the
    result follows.

$(b)$ Recall that $\bf{x}=\bf{0}$ is a global attractor (resp. a
global repeller) if  $\lim_{t\to\infty}(x(t),y(t))=\bf{0}$ for all
the solutions $(x(t),y(t))$ of the system (resp.
$\lim_{t\to-\infty}(x(t),y(t))=\bf{0}$). In the homogeneous case, if
$\bf{x}=\bf{0}$ is a global attractor  it cannot have neither
elliptic nor  hyperbolic sectors since otherwise there would appear
an invariant straight line with an escaping orbit. Hence, according
to the Bendixon's index formula, $\mathrm{ind}(f)=1+(e-h)/2,$ since
$e=h=0$ we have $\mathrm{ind}(f)=1$ and therefore, from
Corollary~\ref{co:homo},   $n$ must odd. The same argument holds for
the existence of global repellers. Using again the tools introduced
in \cite{A} to determine the phase portraits we know that the events
of having a global attractor or a global repeller are measurable.
Moreover, in particular  we know that $a_n=r_n=0$ when $n$ is even.

Notice that $f$ has $\bf{x}=\bf{0}$ as a global attractor if and only
    if $-f$ has $\bf{x}=\bf{0}$ as a global repeller, and again   due to
     the symmetry of  the distribution of the random variables $A_{i,j}$ and $B_{i,j},$ we know that $f$ and $-f$ have the same distribution.
     Hence $a_n=r_n$ as we wanted to prove. Finally, it is not
     difficult to find an open set of vector fields with the desired
     properties, so $a_n=r_n\ne0$ when $n$ is odd.

$(c)$ In the paper~\cite{EK}, Edelman and~Kostlan give the expected
number of real zeros of any equation of type
$$A_0f_0(t)+A_1f_1(t)+\ldots +A_kf_k(t)=0,$$
where $A_j$ $i=0,\ldots,k$ are normal distributed with zero mean, not
 necessarily  being neither identically distributed nor independent, and  $f_j$ are
  differentiable functions. Following the notation in \cite[Thm 3.1]{EK} if we consider a
random polynomial
    \begin{equation}\label{e:Prandom}
    P(\kappa)=\sum_{j=0}^{n+1} P_j \kappa^j
    \end{equation} where $P_j$ are normal random variables
     with mean zero and covariance matrix $M_{n+1}$;
     $w(\kappa)=M_{n+1}^{1/2}\cdot (1,\kappa,\kappa^2,\ldots,\kappa^{n+1})^t$
     and $\mathbf{w}(\kappa)=w(\kappa)/||w(\kappa)||$, then the expected
     number of real zeros of $P$ is given by the Edelman-Kostlan formula:
    $$
    \int_{-\infty}^{\infty} \frac{1}{\pi} ||\mathbf{w}'(\kappa)||\,\dr \kappa.
    $$
    A straightforward computation gives that the random
     polynomials $T_n(\kappa)$ in \eqref{e:Fdirecions} that control
      the number of invariant straight lines of the random system
      \eqref{e:rhvf}
are
\[
T_n(\kappa)=B_{n,0}+\sum_{j=1}^{n}\big(B_{n-j,j}-A_{n-j+1.j-1}\big)\kappa^j+A_{0,n}\kappa^{n+1}.
\]
Moreover since all $A_{i,j}$ and $B_{i,j}$ have $\mathrm{N}(0,1)$
distribution and are independent, it holds that
\[
T_n(\kappa)=C_0+\sum_{j=1}^n C_j \kappa^j+C_{n+1} \kappa^{n+1},
\]
where $C_0$ and $C_{n+1}$ are  $\mathrm{N}(0,1)$ and the other $C_j$
are centered normal random variables with standard deviation
$\sqrt{2}$, being all the $n+2$ random variables independent. Thus
we deal with an expression of the form \eqref{e:Prandom}. In
particular, their covariance matrices are
    $$
    M_2=\left(\begin{array}{ccc}
    1 & 0 &0 \\
    0 & 2 & 0\\
    0 & 0 & 1
    \end{array}\right),\quad M_3=\left(\begin{array}{cccc}
    1 & 0 &0 &0\\
    0 & 2 & 0& 0\\
    0 & 0 & 2& 0\\
    0& 0 & 0 & 1
    \end{array}\right)\quad \mbox{and}\quad M_4=\left(\begin{array}{ccccc}
    1 & 0 &0 &0&0\\
    0 & 2 & 0& 0&0\\
    0 & 0 & 2& 0&0\\
    0& 0 & 0 & 2& 0\\
    0 & 0 & 0& 0 & 1
    \end{array}\right),
    $$
    respectively. The values of $\Lambda_n$ can be obtained by  straightforward
    computations. For instance, for $n=1,$
    \[
{\bf w}(\kappa)=\left(\frac{1}{1+\kappa^2},\frac{\sqrt
2\kappa}{1+\kappa^2},
\frac{\kappa^2}{1+\kappa^2}\right)\quad\mbox{and}\quad {\bf
w}'(\kappa)=\left(\frac{-2\kappa}{(1+\kappa^2)^2},\frac{\sqrt
2(1-\kappa^2)}{(1+\kappa^2)^2},
\frac{2\kappa}{(1+\kappa^2)^2}\right).
    \]
Hence,
    \[
        \Lambda_1= \frac{1}{\pi}\,\int_{-\infty}^{\infty} ||\mathbf{w}'(\kappa) ||\,\dr \kappa=
           \frac{1}{\pi}\,\int_{-\infty}^{\infty} \frac{\sqrt{2}}{(1+\kappa^2)}\,\dr \kappa=\sqrt{2}.
    \]
    For $n=2,3$ we skip the details.
    ~\end{proof}

\begin{nota} By using the same tools that in the proof  of item~$(c)$ of Theorem~\ref{te:tot},
other values of $\Lambda_n$ can be obtained. For instance
\[
\Lambda_4\approx 1.94648,\quad \Lambda_5\approx 2.05788,\quad
\Lambda_6\approx 2.15303,\quad \Lambda_7\approx
2.23603,\ldots\quad \Lambda_{10}\approx  2.43552.
\]
\end{nota}

\section{Random linear vector fields and proof of  Theorem~\ref{t:teolineals}}\label{ss:planarlinearsystems}

We give a simple self contained proof of
Theorem~\ref{t:teolineals} and also an alternative proof based on the results
given in item $(c)$ of Theorem~\ref{te:tot}, as a corollary of the
computation of the expected number of invariant straight lines,
$\Lambda_1.$

\begin{proof}[Proof of Theorem~\ref{t:teolineals}]  For shortness we denote $l_j=P(L_j),$ $j=1,2,3.$
The only phase portraits of  linear vector fields given by inequalities
among the parameters  of the vector field are the saddle; the (generic)
node with two different eigenvalues; and the focus. Their phase
portraits in the Poincar\'{e} disk  (\cite{DLA}), modulus time orientation, are the ones of
Figure~\ref{f:dibuix1}. Moreover  $l_1+l_2+l_3=1.$ A distinction
between nodes
    and focus comes from the fact that for the case of focus there are
    not invariant straight lines while generic nodes have two invariant
    straight lines.

    It is also well-known that the index of the origin in the saddle case is $-1$ while
     in the node and focus cases it is $+1.$ Therefore, by item $(a)$ of Theorem~\ref{te:tot},
      $l_1=l_2+l_3$. By using both equalities we get that $l_1=\frac12$ and $l_2+l_3=\frac12.$
      Hence to prove the theorem it suffices to calculate either $l_2$ or $l_3.$

    Let us prove that $l_3=1-\frac{\sqrt{2}}{2}.$ The characteristic polynomial
     associated to the linear vector fields $(Ax+By)\frac{\partial}{\partial x}+(Cx+Dy)\frac{\partial}{\partial y}$ is
    $\lambda^2-(A+D)\lambda+AD-BC.$ Hence, the probability that
    $\mathbf{x}=\mathbf{0}$ is a focus is $l_3=P((A-D)^2+4BC<0).$ Since $A-D$ is a
    normal random variable with zero mean and standard deviation $\sqrt{2}$, to compute
    the probability of  $\mathbf{x}=\mathbf{0}$ being a focus we consider the
     random vector $(X,Y,Z):=(B,C,A-D)$, where $X,Y$ and $Z$ are independent normal variables
      with zero mean, and $\sigma_X=\sigma_Y=1$ and $\sigma_Z=\sqrt{2}$. Thus, the joint density function  is:
    $$
    \psi(x,y,z)=\frac{1}{4\,\pi^{3/2}}\,\mathrm{e}^{-\frac{2x^2+2y^2+z^2}4}.
    $$
    Hence
    $
    l_3=P(Z^2+4XY<0)=\int_{K} \psi(x,y,z) \dr x\,\dr y\,\dr z$, where
    $K:=\{(x,y,z)\in\mathbb{R}^3:  z^2+4xy<0\}$. To compute this integral, we perform
    the change of variables
    $$
    x:=r\sin \left( t \right) \cos \left( s \right),
    y: =r\sin \left( t \right) \sin \left( s \right),
    z: =\sqrt {2}r\cos \left( t \right),
    $$
    where $t\in \left(-\frac{\pi}{2},\frac{\pi}{2}\right),$ $s\in
    (0,2\pi)$ and $r>0.$ Since the determinant of the Jacobian of the change is
    $\sqrt{2} \sin(t) r^2$, we have
    $$l_3=\frac{1}{4 \pi^{3/2}}\int_{\widetilde{K}}\int_0^{+\infty}\sqrt{2} \sin(t)r^2{{\rm
            e}^{-1/2 {r}^{2}}}\dr r \,\dr s\, \dr t
    $$
    where
    $\widetilde{K}=\{(s,t):\cos(t)^2+2 \sin(t)^2 \sin(s) \cos(s)<0\}.$
    Recall that $$\int_0^{\infty}
    \sqrt{2} r^2{{\rm e}^{-1/2 {r}^{2}}} \dr r = \sqrt{\pi}.$$ To
    calculate the remainder part of the integral,
    $\int_{\widetilde{K}}\sin(t)\dr t \dr s $, we consider the curve
    $\cos^2(t)+2 \sin^2(t) \sin(s) \cos(s)=0,$ that is,
    $t=\pm\arctan\frac{1}{\sqrt{-\sin(2s)}}$, which is depicted in Figure~\ref{f:dibuixa}.
    \begin{figure}[htb]
        \begin{center}
            \includegraphics[scale=0.35]{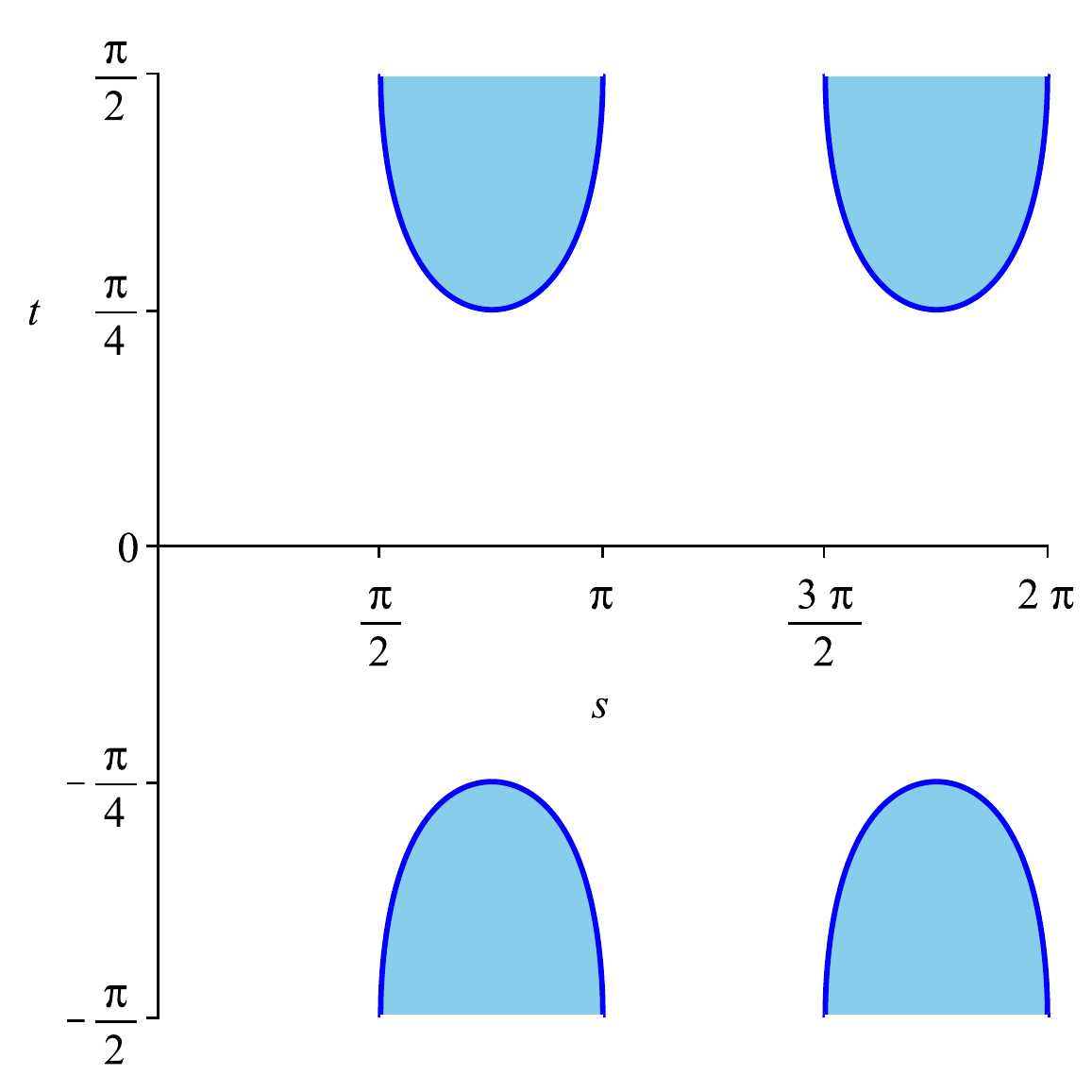}
            \caption{Graphic of the curve $\cos^2(t)+2 \sin^2(t) \sin(s) \cos(s)=0$. The set $\widetilde{K}$ is shadowed.}
            \label{f:dibuixa}
        \end{center}
    \end{figure}

    It is easy to see that  the shadowed area in Figure~\ref{f:dibuixa} corresponds to $\widetilde{K}.$ Using the symmetries of this curve, and
    calling $g_1(s):=\arctan\frac{1}{\sqrt{-\sin(2s)}}$, we get that
    $$\int_{\widetilde{K}}\sin(t)\,\dr s\,\dr t=4\,\int_{\pi/2}^{\pi}\int_{g_1(s)}^{\pi/2}\,\sin(t)\,\dr t\,\dr s.$$
    Taking into account that $\cos(\arctan(\xi))=1/\sqrt{\xi^2+1}$, and 
    doing the change $u=2s$, we get
    $$\int_{\pi/2}^{\pi}\int_{g_1(s)}^{\pi/2}\,\sin(t)\,\dr t\,\dr s=    
\frac{1}{2} \, \int_{\pi}^{2\pi}  \sqrt{ \frac{\sin(u)}{\sin(u)-1}}\,\dr u=
    \frac{\pi}{2}\,(2-\sqrt{2}).$$
    Hence
    \[
    l_3=\frac{\sqrt{\pi}}{4 \pi^{3/2}}\int_{\widetilde{K}}\sin(t)\,\dr t\,\dr s=\frac{1}{4 \pi}\frac{4\pi}2(2-\sqrt 2)=1-\frac {\sqrt 2}2
    \]
    and the computation of all probabilities  follows.

Finally,  notice that if a vector field has the phase portrait $L_2$
(or $L_3$) and we perform a time reversion $t\to -t$ then
we get the same picture with reversed time arrows, and hence the two
phase portraits are not conjugated, but topologically equivalent. In fact, the pictures of $L_2$
and $L_3$ in Figure \ref{f:dibuix1} correspond to global attractors.
For the phase portrait $L_1$ a time reversal gives
 a  conjugated one.

 Due to the symmetry of the random variables that give the coefficients of the random homogeneous vector field \eqref{e:rhvf} we
 know that $f$ and $-f$ have the same distribution. Hence the
probability to get an attractive node (resp. focus) coincides with
the probability to get a repulsive node (resp. focus), see also item (b) of Theorem~\ref{te:tot}. Therefore,
\begin{multline*}
P({\bf 0}\, \text{ is and attractor})=P({\bf 0}\, \text{ is an
attractive node})+P({\bf 0}\, \text{in an attractive
focus})\\=\frac{1}{2}\,P({\bf 0}\, \text{ is a
node})+\frac{1}{2}\,P({\bf 0}\, \text{ is a
focus})=\frac{1}{2}\,l_2+\frac{1}{2}\,l_3=\frac{1}{2}(l_2+l_3)=\frac{1}{4}.
\end{multline*}
~\end{proof}

\bigskip

\begin{proof}[Alternative proof of Theorem~\ref{t:teolineals}] Set $l_j=P(L_j), j=1,2,3.$
Following the same steps that  in the previous proof Theorem~\ref{t:teolineals}, we obtain that
$l_1=\frac12$ and $l_2+l_3=\frac12.$ The only difference in this
proof  is how to find $l_2$ and $l_3.$ By item $(c)$ of Theorem
\ref{te:tot},
\[
2\big(l_1+l_2\big)+0\cdot l_3=2\ell_1(2)+0\cdot
\ell_1(0)=\Lambda_1=\sqrt{2}.
\]
By joining the three equalities we obtain that $l_3=1-\sqrt{2}/2$
and the result follows.~\end{proof}

\section{Random homogeneous quadratic vector fields}\label{s:quad}

Since we are interested in knowing the phase portraits
\eqref{e:rhvf} with positive probability, we can assume that the
components of our vector fields have not common factors (see Section
\ref{ss:index}). Hence  the multiplicity  at $\mathbf{0}$ is
$n^2$. For the quadratic case ($n=2$), it is 4.

To prove Theorem~\ref{t:teoquad} we need to algebraically
characterize the indices at the origin and also the number of
invariant straight lines though it for quadratic homogenous vector
fields
\begin{equation}\label{quad}
f_2(x,y)=(ax^2+bxy+cy^2)\, \frac{\partial}{\partial x}+
\,(dx^2+exy+fy^2) \frac{\partial}{\partial y},
\end{equation}
with the origin being an isolated singularity. This is so because,
as we will see, the phase portrait of the quadratic homogeneous
vector fields with positive measure in the event space (which is
given by the parameter space $\Omega=\mathbb{R}^6$) is characterized
by these two numbers.

\subsection{Index at the origin}

By Corollary \ref{co:homo} the index at the origin of \eqref{quad}
is $-2,0$ or $2.$ The next result, explicitly characterizes
$\mathrm{ind}(f_2)$ for generic vector fields, in terms of algebraic
inequalities (hence open sets in the parameter space $\mathbb{R}^6$).

Set
$$\lambda:=-\frac{ae-bd}{af-cd},\quad\mu:=-\frac{bf-ce}{af-cd},\quad j:=4(af-cd)(1-\lambda\mu).$$
We introduce an additional genericity condition for vector field
\eqref{quad}. We will say that the vector field $f_2$ is \emph{well-posed}
if $af-cd\neq 0,$ $\lambda\cdot \mu\neq 0,$ $\lambda\mu-1\neq
0$ and $\lambda+\mu\neq 0$. Observe that well-posed random
homogeneous quadratic  vector fields \eqref{quad} have full
probability.

\begin{teo}\label{t:teo15} Let $f_2$ as in \eqref{quad} be a well-posed vector field, and let $\epsilon=\pm 1$ be such that
$\epsilon\,j>0.$ Then the following holds:
\begin{enumerate}
\item[(a)] $\mathrm{ind}(f_2)=0$ if and only if
$\lambda\mu-1<0.$
\item[(b)] $\mathrm{ind}(f_2)=2$ if and only if
$\lambda\mu-1>0$ and $\epsilon(\lambda+\mu)>0.$
\item[(c)] $\mathrm{ind}(f_2)=-2$ if and only if
$\lambda\mu-1>0$ and $\epsilon(\lambda+\mu)<0.$
\end{enumerate}
\end{teo}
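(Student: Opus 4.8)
The plan is to apply the Eisenbud--Levine signature formula (Theorem~\ref{te:EL}) with $m=2$ and $f=f_2$ as in~\eqref{quad}. By Corollary~\ref{co:homo} the origin is an isolated singularity with $\mu(f_2)=4$, so the local algebra $Q(f_2)=C_{\bf 0}^\infty(\R^2)/(p_2,q_2)$, where $p_2=ax^2+bxy+cy^2$ and $q_2=dx^2+exy+fy^2$, is a $4$-dimensional real vector space. The heart of the argument is to describe this algebra explicitly. I would produce the defining relations by taking the linear combinations of $p_2,q_2$ that cancel one monomial at a time: eliminating $y^2$ through $f\,p_2-c\,q_2$ and eliminating $x^2$ through $d\,p_2-a\,q_2$ gives, after dividing by $af-cd\neq0$,
\[
x^2\equiv \mu\,xy,\qquad y^2\equiv \lambda\,xy \quad\text{in } Q(f_2),
\]
with $\lambda,\mu$ exactly as defined before the statement. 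Multiplying these relations yields $(1-\lambda\mu)\,x^2y\equiv0$, so since $\lambda\mu\neq1$ every monomial of degree $\ge3$ vanishes in $Q(f_2)$. Hence $\{1,x,y,xy\}$ is a basis and $xy$ spans the socle.

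This first step is where all the well-posedness hypotheses enter, and I expect it to be the main obstacle: one must confirm that the four listed elements are linearly independent (so that $\dim Q(f_2)=4$ is consistent with $\mu(f_2)=4$) and that the residue of the Jacobian lands in the one-dimensional socle. Concretely, a direct expansion gives
\[
J=\det(\mathrm{D}f_2)=2(ae-bd)x^2+4(af-cd)xy+2(bf-ce)y^2,
\]
and reducing with the relations above together with $ae-bd=-\lambda(af-cd)$ and $bf-ce=-\mu(af-cd)$ collapses this to $\bar J=4(af-cd)(1-\lambda\mu)\,xy=j\,xy$. I would then take $\varphi$ to be $\epsilon$ times the functional ``coefficient of $xy$'' in the basis $\{1,x,y,xy\}$; since $\epsilon j>0$ this gives $\varphi(\bar J)=\epsilon j=|j|>0$, as required by Theorem~\ref{te:EL}.

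Finally I would write the matrix of the bilinear form $\langle u,v\rangle=\varphi(uv)$ in this basis, using the multiplication table $x^2\equiv\mu xy$, $y^2\equiv\lambda xy$ and $x\cdot xy\equiv y\cdot xy\equiv(xy)^2\equiv0$. After reordering the basis as $(1,xy,x,y)$ the matrix decouples into the neutral block $\epsilon\left(\begin{smallmatrix}0&1\\1&0\end{smallmatrix}\right)$, of signature $0$, and the block $\epsilon\left(\begin{smallmatrix}\mu&1\\1&\lambda\end{smallmatrix}\right)$, whose determinant equals $\lambda\mu-1$ and whose trace equals $\epsilon(\lambda+\mu)$. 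Reading off the signature of a symmetric $2\times2$ matrix from the signs of its determinant and trace then finishes the proof: when $\lambda\mu-1<0$ the second block also has signature $0$, so $\mathrm{ind}(f_2)=0$, giving~(a); when $\lambda\mu-1>0$ the block has signature $+2$ or $-2$ according to the sign of its trace $\epsilon(\lambda+\mu)$, giving $\mathrm{ind}(f_2)=2$ in case~(b) and $\mathrm{ind}(f_2)=-2$ in case~(c). Note that the condition $\lambda+\mu\neq0$ is precisely what keeps us away from the degenerate boundary trace~$=0$, and $\lambda\mu-1\neq0$ is what guarantees both the clean basis and a nondegenerate form.
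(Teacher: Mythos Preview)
Your proposal is correct and follows essentially the same route as the paper: the same basis $\{1,x,y,xy\}$ of $Q(f_2)$, the same relations $x^2\equiv\mu\,xy$, $y^2\equiv\lambda\,xy$, the same residue $\bar J=j\,xy$, and the same functional $\varphi$. The only cosmetic difference is that the paper writes the $4\times4$ Gram matrix in the order $(1,x,y,xy)$ and factors its characteristic polynomial as $(z^2-1)\bigl(z^2-\epsilon(\lambda+\mu)z+\lambda\mu-1\bigr)$, whereas you reorder to $(1,xy,x,y)$ to make the block structure visible and read off the signature from the determinant and trace of the $2\times2$ block; the two presentations are equivalent.
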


\begin{proof} We will start proving that
$$Q(f_2)=C_{\bf{0}}^{\infty}(\R^2)/(f_2)\,=\,\left\langle 1,\bar{x},\bar{y},\bar{x}\bar{y} \right\rangle.$$
Notice that the dimension of this space is 4 as expected because we
already knew that  the multiplicity of $f_2$ at ${\bf{0}}$ is 4.
Indeed, observe that since the components of $f_2$ are  zero in
$Q(f_2),$ and using that $f_2$ is well-posed, we get that
$$a(d\bar{x}^2+e\bar{x}\bar{y}+f\bar{y}^2)-d(a\bar{x}^2+b\bar{x}\bar{y}+c\bar{y}^2)=
(ae-bd)\bar{x}\bar{y}+(af-cd)\bar{y}^2=0,$$ hence
$\bar{y}^2=\lambda\bar{x}\bar{y}.$ Similarly
$\bar{x}^2=\mu\bar{x}\bar{y}.$ It implies that
$$\bar{x}^2\bar{y}=(\mu\bar{x}\bar{y})\bar{y}=\mu\bar{x}\bar{y}^2=\mu\bar{x}(\lambda\bar{x}\bar{y})=\mu\lambda\bar{x}^2\bar{y}.$$
Since $\mu\lambda\ne 1$ we get that $\bar{x}^2\bar{y}=0$ in $Q(f_2).$
It easily implies that all the monomials of degree $k\ge 3$ are zero
in the quotient ring. Furthermore, some computations give that
\[
 j=2(bf-ce)\lambda+4(af-cd)+2(ae-bd)\mu=4(af-cd)(1-\lambda\mu).
\]
Hence, the residual class of the Jacobian of $f_2$ is
$\bar{J}=j\bar{x}\bar{y}.$

Let $\varphi:Q(f_2)\rightarrow \R$ be the functional sending
$\bar{x}\bar{y}$ to $\epsilon=\pm 1$ with $\epsilon j>0$, and sending
the other basis elements to $0.$ Then the matrix of $< ,
>_{\varphi}$ with respect this basis is:
$$\left( \begin {array}{cccc} 0&0&0&\epsilon\\ \noalign{\medskip}0&\mu
\,\epsilon&\epsilon&0\\
\noalign{\medskip}0&\epsilon&\epsilon\,\lambda &0\\
\noalign{\medskip}\epsilon&0&0&0\end {array} \right).$$ The
characteristic polynomial is given by
$P(z)=(z^2-1)\left(z^2-(\lambda+\mu)\epsilon z+\lambda
\mu-1\right).$ Let $z_1,z_2$ be the two roots of
$z^2-(\lambda+\mu)\epsilon z+\lambda \mu-1=0.$ Since $z_1
z_2=\lambda \mu-1$, $z_1+z_2=(\lambda+\mu)\epsilon $ and the
signature of a quadratic form is the difference between positive
eigenvalues and the negative ones, the result follows.
\end{proof}

\subsection{Number of invariant straight lines trough the origin}\label{ss:il}

Concerning the number of invariant straight lines passing through
the origin of \eqref{quad}, as we have proved in Section
\ref{ss:invlines}, generically it suffices to look at equation \eqref{e:Fdirecions}, which writes
$t_2(\kappa)=-c\kappa^3+(f-b)\kappa^2+(e-a)\kappa+d=0,$ with $c\ne0.$ Again,
generically a cubic equation  has either three different real
roots or one simple real root. These two possibilities are
distinguished by the discriminant of $t_2$, $\Delta_{t_2}$  which is
given by
$$\Delta_{t_2}=\frac{1}{c^4}\left[-27\,{c}^{2}{d}^{2}-18\,cd \left( b-f \right)  \left( a-e \right) +4
\,d \left( b-f \right) ^{3}+ \left( b-f \right) ^{2} \left( a-e
 \right) ^{2}-4\,c \left( a-e \right) ^{3}\right].$$ The result is
 that $t_2$ has three different real roots if and only
 $\Delta_{t_2}>0$ and $t_2$ has just one simple real root if and only
 $\Delta_{t_2}<0.$ These two cases give a full probability event for
  the random vector field \eqref{e:rhvf} with $n=2.$

\subsection{Proof of Theorem~\ref{t:teoquad} and Table~\ref{t:taula2}}

\begin{proof}[Proof of Theorem~\ref{t:teoquad}]
 The phase portarits of quadratic homogeneous vector fields are well-know,   see~\cite{A,D}.
 It is not difficult to see that the only one with positive probability  are the five ones
 given in Figure~\ref{f:dibuix2}. This is so, because all the other ones are characterized
 by some equality among the coefficients and the vector field, and so, they have probability 0
 of appearance. Set $q_j=P(Q_j)$, $j=1,2,\ldots,5.$ By looking at these phase portraits and using
 the notation of Theorem~\ref{te:tot} we have that
 \[
u_2(-2)= q_1,\, u_2(0)= q_2+q_4 ,\, u_2(2)= q_3+q_5,\, \ell_2(1)=q_4+q_5 ,\, \ell_2(3)=q_1+q_2+q_3.
 \]
By Theorem~\ref{te:tot} we know that  $u_2(2)=u_2(-2),$  and
$\ell_2(1)+3\ell_2(3)=\Lambda_2.$ Hence
$q_1=q_3+q_5$ and $q_4+q_5+3(q_1+q_2+q_3)=\Lambda_2.$ Using that $\sum_{j=1}^5 q_j=1,$ these two
equalities give the two ones in the statement of the theorem.
\end{proof}

Taking into account the three relations among the $q_j$ given in
Theorem~\ref{t:teoquad}, only two more relations among these five
probabilities have to be found in order obtain their exact values.
For instance one of these new relations could be $u_2(0)= q_2+q_4.$
 By using item (a) of Theorem \ref{t:teo15} and the probability
density function given in \eqref{E:densityf} we obtain
\begin{align*}
u_2(0)
&=\int_{K} \mathrm{e}^{-\frac{a^2+b^2+c^2+d^2+\mathrm{e}^2+f^2}{2}}\dr a\,\dr b\,\dr c\,\dr d\,\dr e\,\dr f,
\end{align*} where $K=\{(a,b,c,d,e,f)\in\R^6:(ae-bd)(bf-ce)<(af-cd)^2\}.$
We have not been able  to calculate the above integral analytically. It could be approximated by several
numerical methods. In fact, one of the most used is  Monte Carlo method to evaluate multiple integrals.
For this reason we have decided to compute directly the values $P(Q_j)$ by direct Monte Carlo simulation
instead of approaching $u_2(0).$

The results of Theorem~\ref{t:teo15} and the ones of
Section~\ref{ss:il}  that give algebraic inequalities among the
coefficients to know the index at the origin, and the number of
invariant straight lines through it, respectively, allow to
determine the phase portrait of any well-posed homogeneous quadratic
vector field with an isolated singularity. Notice that these vector
fields have full probability. We use   these results to know which
phase portrait corresponds to each sample generated by Monte Carlo
method.   The obtained approximations of the values $q_j$ are given
in Table~\ref{t:taula2}.

\begin{nota}\label{re:invariants} Another way to distinguish the five phase portraits with positive probability for planar quadratic homogeneous systems  consists on using the \emph{invariants approach} adopted in \cite{ALSV} for quadratic systems. There, the nature and configuration of the singular points at the infinity in the Poincar\'{e} Compactification are given in terms of some invariants called $\eta,\mu_0$ and $\kappa$, which are certain homogeneous polynomials of degree 4 on the coefficients of the systems. The phase portraits  $Q_i$, $i=1,\ldots, 5$ correspond with the Configurations 1,5,7,30 and 34 respectively, given in Figure 6 and classified in Diagrams 1 and 2 of \cite{ALSV}. In fact, the invariant $\eta$ is the numerator of the discriminant $\Delta_{t_2}$ given in Section \ref{ss:il}.
\end{nota}

\section{Random homogeneous cubic vector fields}\label{s:cubics}

 This section mimics the previous one but with more involved computations.
 There is only one essential difference, as we will see,  for first time there appear
 two phase portraits that are neither conjugated nor equivalent but share index and
 number of invariant straight lines.

Consider a cubic homogeneous vector field
\begin{equation}\label{cubic} f_3(x,y)= (ax^3+bx^2y+cxy^2+dy^3)\,\frac{\partial}{\partial
x}+ (ex^3+fx^2y+gxy^2+hy^3)\,\frac{\partial}{\partial y}.
\end{equation}
Arguing as in Section~\ref{s:quad} we can assume that it has the origin as an isolated singularity and hence it has multiplicity $9.$

We start studying the index of \eqref{cubic}. By Corollary \ref{co:homo} we already know that
the only indices of the origin for \eqref{cubic} are $-3,-1,1,3.$ Set:
\begin{align*}
r:=&-\frac{bf-df}{ah-de}, \quad s:=-\frac{ch-dg}{ah-de}, \quad
p:=-\frac{af-be}{ah-de}, \quad q:=-\frac{ag-ec}{ah-de},\\
h_1:&=\frac{r(r+sq)+s(1-ps)}{1-ps}, \,\,  h_2:=\frac{r+sq}{1-ps}, \,\,
h_3:=\frac{pr+q}{1-ps}, \,\, h_4:=\frac{p(1-ps)+q(pr+q)}{1-ps},\\
j:=&\left( 3af-3be \right)h_1+
\left( 6ag-6ce \right)h_2+ \left( 9ah+3bg-3cf-9de \right)\\&+ \left( 6b
h-6df \right)h_3+ \left( 3ch-3dg \right)h_4.
\end{align*}
We also will need:
$$\begin{array}{lr}
\alpha:=-\epsilon\,(1+h_1+h_4),\\
\beta:=h_1h_4-h_2^2-h_3^2+h_1+h_4-1,\\
\gamma:=\epsilon\,(h_1h_3^2+h_2^2h_4-h_1h_4-2h_2h_3+1),
\end{array}$$
where for $j\ne0,$ $\epsilon \in\{-1,1\}$ is such that $\epsilon j>0,$ and
\[
C_2:=\alpha \beta-9\gamma,\quad
D_3:=-27\gamma^2+18\alpha\beta\gamma-4\alpha^3\gamma+\alpha^2\beta^2-4\beta^3.\]
We introduce a  genericity
condition for this cubic case. We say that $f_3$ is well-posed if $ah-ed\neq 0$ and
$ps\neq 1$; $r\cdot s\cdot p\cdot q\neq 0$ and $h_1\cdot h_2\cdot
h_3\cdot h_4\cdot j \neq 0$; $\alpha \cdot \beta \cdot \gamma\neq 0$ and
$C_2 \cdot D_3\neq 0.$ As in the quadratic case,  well-posed random cubic vector
fields have full probability.

\begin{teo}\label{indexcubics} Let $f_3$ as in \eqref{quad} be a well-posed cubic vector field. Then the following holds:
\begin{enumerate}
\item[(a)] $\mathrm{ind}(f_3)=-3$ if and only if
$D_3>0\,,\,C_2>0\,,\,\beta>0\,,\,\gamma>0.$
\item[(b)] $\mathrm{ind}(f_3)=-1$ if and only if either $$\begin{array}{lr}
D_3<0\,,\,\gamma>0 \,\,\text{or}\\
D_3>0\,,\,\gamma<0\,,\,C_2>0\,\,\text{or}\\
D_3>0\,,\,\gamma<0\,,\,C_2<0\,,\,\beta<0.
\end{array}$$
\item[(c)] $\mathrm{ind}(f_3)=1$ if and only if either $$\begin{array}{lr}
D_3<0\,,\,\gamma<0 \,\,\text{or}\\
D_3>0\,,\,\gamma>0\,,\,C_2<0\,\,\text{or}\\
D_3>0\,,\,\gamma>0\,,\,C_2>0\,,\,\beta<0.
\end{array}$$
\item[(d)] $\mathrm{ind}(f_3)=3$ if and only if
$D_3>0\,,\,C_2<0\,,\,\beta>0\,,\,\gamma<0.$
\end{enumerate}
\end{teo}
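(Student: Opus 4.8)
The plan is to mimic exactly the strategy used in the proof of Theorem~\ref{t:teo15} for the quadratic case, but now working in the nine-dimensional quotient ring attached to the cubic field $f_3$. The guiding principle is the Eisenbud--Levine signature formula (Theorem~\ref{te:EL}): we must (i) produce an explicit monomial basis of $Q(f_3)=C_{\bf 0}^\infty(\R^2)/(f_3)$, (ii) compute the residue class $\bar J$ of the Jacobian determinant and choose a functional $\varphi$ with $\varphi(\bar J)>0$, (iii) write down the Gram matrix of the bilinear form $\langle p,q\rangle_\varphi=\varphi(pq)$ in that basis, and (iv) determine its signature through the sign conditions on the characteristic polynomial, which by Corollary~\ref{co:homo} must equal one of $\pm 1,\pm 3$.

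First I would establish the basis. Since $\mu(f_3)=9$ and the origin is isolated, $Q(f_3)$ has dimension $9$. Using that the two cubic components vanish in $Q(f_3)$ and that $f_3$ is well-posed (so $ah-ed\neq 0$ and $ps\neq 1$), I expect to solve for the top-degree monomials: taking suitable combinations $a\cdot q_3-e\cdot p_3$ and $d\cdot p_3-a\cdot(\text{lower})$ type eliminations should express $\bar x^3,\bar y^3$ in terms of lower monomials, and then the relations propagate so that every monomial of degree $\ge 5$ vanishes, leaving the natural candidate basis $\{1,\bar x,\bar y,\bar x^2,\bar x\bar y,\bar y^2,\bar x^3,\bar x^2\bar y,\dots\}$ truncated to nine elements — the quantities $r,s,p,q$ are precisely the structure constants $\bar y^2=\dots$, $\bar x^2=\dots$ that appear, and $h_1,h_2,h_3,h_4$ encode the reduction of the degree-three monomials. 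Carefully tracking these reductions is the first bookkeeping step, and it is essential that $1-ps\neq 0$ (equivalently $ps\neq 1$) so the linear systems are invertible, exactly as $\lambda\mu\neq 1$ was needed before.

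Next I would compute $\bar J$; the displayed formula for $j$ is the claim that $\bar J=j\,\bar m$ for the appropriate top basis monomial $\bar m$, so I choose $\varphi$ sending $\bar m\mapsto\epsilon$ (with $\epsilon j>0$) and the other basis elements to $0$. Then the Gram matrix factors into blocks: a hyperbolic part contributing a fixed signature, plus a nontrivial block whose characteristic polynomial I expect to be a cubic $z^3+\alpha z^2+\beta z+\gamma$ — this is exactly why $\alpha,\beta,\gamma$ are defined, and why their combinations $C_2$ and $D_3$ (a subdiscriminant and the full discriminant of that cubic) appear. The signature is then read off from the number of positive versus negative real roots of this cubic, which is governed by the signs of $D_3$ (real versus complex roots), $\gamma$ (product of roots), and $C_2,\beta$ (to resolve the ambiguous cases), yielding precisely the four sign-regimes in items (a)--(d). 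I would invoke the Appendix results on locating the real roots of a degree-three polynomial to make the case analysis on $(D_3,C_2,\beta,\gamma)$ rigorous.

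The main obstacle I anticipate is twofold. The heavier computational difficulty is correctly reducing the degree-$\ge 3$ monomials and verifying the closed form for $j$: unlike the quadratic case where only $\bar x^2\bar y=0$ had to be checked, here one must confirm that the nine-dimensional space closes up consistently, and the expressions $h_1,\dots,h_4$ make this algebra substantially more intricate. The more conceptual obstacle is the final signature extraction: the $9\times 9$ Gram matrix does not split as cleanly as the quadratic $4\times 4$ one, so isolating the relevant cubic factor of the characteristic polynomial and matching its root configuration to the index values $\{-3,-1,1,3\}$ via the discriminant chain $(D_3,C_2,\beta,\gamma)$ is where the proof earns its weight; correctly handling the three disjoint subcases for $\mathrm{ind}=\pm 1$ in items (b),(c) is the delicate part.
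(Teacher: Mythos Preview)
Your overall strategy is exactly the paper's: apply the Eisenbud--Levine formula, exhibit a nine-element monomial basis of $Q(f_3)$, reduce the Jacobian to $j$ times the top monomial, choose $\varphi$ accordingly, and observe that the characteristic polynomial of the Gram matrix factors so that only the cubic $S(z)=z^3+\alpha z^2+\beta z+\gamma$ governs the signature, with Lemma~\ref{signaturacubics} finishing the case analysis.

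A few details in your description are off and will bite if you don't catch them early. The structure constants $r,s,p,q$ do \emph{not} reduce degree-two monomials; they express the degree-three monomials $\bar x^3=r\,\bar x^2\bar y+s\,\bar x\bar y^2$ and $\bar y^3=p\,\bar x^2\bar y+q\,\bar x\bar y^2$ in terms of the other two cubic monomials (same degree, not lower). Consequently $\bar x^3$ is \emph{not} a basis element: the basis is $\{1,\bar x,\bar y,\bar x^2,\bar x\bar y,\bar y^2,\bar x^2\bar y,\bar x\bar y^2,\bar x^2\bar y^2\}$, and the $h_i$ encode the degree-\emph{four} reductions $\bar x^4=h_1\bar x^2\bar y^2$, $\bar x^3\bar y=h_2\bar x^2\bar y^2$, $\bar x\bar y^3=h_3\bar x^2\bar y^2$, $\bar y^4=h_4\bar x^2\bar y^2$; the top monomial is $\bar x^2\bar y^2$ and $\bar J=j\,\bar x^2\bar y^2$. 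With this corrected, the Gram matrix is block anti-diagonal and its characteristic polynomial factors as $-(z^2-1)\bigl(z^2-\epsilon(h_2+h_3)z+h_2h_3-1\bigr)\bigl(z^2+\epsilon(h_2+h_3)z+h_2h_3-1\bigr)S(z)$; the six non-cubic roots come in $\pm$ pairs, so their signature contribution is zero and only $S$ matters---which is precisely your conclusion.
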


\begin{proof}
Since the components of $f_3$ are zero in $Q(f_3),$ we get that
$\bar{x}^3=r\bar{x}^2\bar{y}+s\bar{x}\bar{y}^2$ and $\bar{y}^3=p\bar{x}^2\bar{y}+q\bar{x}\bar{y}^2$
and simple computations give
$$\bar{x}^4=h_1\bar{x}^2\bar{y}^2\,\,,\,\,\bar{x}^3\bar{y}=h_2\bar{x}^2\bar{y}^2\,\,,\,\,
\bar{x}\bar{y}^3=h_3\bar{x}^2\bar{y}^2\,\,,\,\,\bar{y}^4=h_4\bar{x}^2\bar{y}^2.$$
From the above equalities and taking into account that $f_3$ is
well-posed we get that all the monomials of degree greater than four
are zero in $Q(f_3).$ Hence,
$$Q(f_3)=\left<1\,,\,\bar{x}\,,\,\bar{y}\,,\,\bar{x}^2\,,\,\bar{x}\bar{y}\,,\,\bar{y}^2\,,\,
\bar{x}^2\bar{y}\,,\,\bar{x}\bar{y}^2\,,\,\bar{x}^2\bar{y}^2\right>,$$
with a basis of nine elements, as expected.
Since the Jacobian of $f_3$ is
\begin{multline*}
\left( 3af-3be \right) {x}^{4}+ \left( 6ag-6ce \right) y{x}^{3}+
\left( 9ah+3bg-3cf-9de \right) {y}^{2}{x}^{2}\\+ \left( 6b h-6df
\right) {y}^{3}x+ \left( 3ch-3dg \right) {y}^{4}\end{multline*} its
residual class is $j\bar{x}^2\bar{y}^2,$ with $j\ne0.$

Let $\varphi:Q(f_3)\rightarrow \R$ be the functional sending
$\bar{x}^2\bar{y}^2$ to $\epsilon=\pm 1$ with $\epsilon j>0$ and
sending the other basis elements to $0.$ Then the matrix of $< ,
>_{\varphi}$ with respect this basis is:

$$\left( \begin {array}{ccccccccc} 0&0&0&0&0&0&0&0&\epsilon
\\ \noalign{\medskip}0&0&0&0&0&0&\epsilon\,{h_2}&\epsilon&0
\\ \noalign{\medskip}0&0&0&0&0&0&\epsilon&\epsilon\,{h_3}&0
\\ \noalign{\medskip}0&0&0&\epsilon\,{h_1}&\epsilon\,{h_2}&
\epsilon&0&0&0\\ \noalign{\medskip}0&0&0&\epsilon\,{h_2}&\epsilon& \epsilon\,{h_3}&0&0&0\\
\noalign{\medskip}0&0&0&\epsilon&\epsilon\,
{h_3}&\epsilon\,{h_4}&0&0&0\\
\noalign{\medskip}0&\epsilon\,{h_2}&\epsilon&0&0&0&0&0&0\\
\noalign{\medskip}0&\epsilon&\epsilon\, {h_3}&0&0&0&0&0&0\\
\noalign{\medskip}\epsilon&0&0&0&0&0&0&0&0
\end {array} \right).$$
The characteristic polynomial is given by
$$P(z)=-(\epsilon+z)\cdot(\epsilon-z)\cdot (z^2-\epsilon(h_2+h_3)z+h_2h_3-1)\cdot (z^2+\epsilon(h_2+h_3)z+h_2h_3-1)\cdot S(z)$$
where
$S(z)=z^3+\alpha\,z^2+\beta\,z+\gamma,$
and $\alpha,\beta, \gamma$ are defined above. In order to compute
the signature of this matrix it is not difficult to see that the only relevant part of
the characteristic polynomial is~$S.$ The conditions on the
coefficients of a cubic polynomial equation to give the number of
positive and negative zeros are established in Lemma
\ref{signaturacubics} in the Appendix. Applying it we get the
result.
\end{proof}

Concerning the number of invariant straight lines passing through
the origin for vector field \eqref{cubic}, generically we have to look at equation:
$t_3(\kappa)=-d{\kappa}^{4}+ \left( -c+h \right) {\kappa}^{3}+
\left( -b+g \right) {\kappa}^{2}+ \left( -a+f \right) \kappa+e,$ with $d\ne0.$
Again, under more generic assumptions, $t_3$ has either four different
real roots, or two simple  real roots, or  no real root.
These three possibilities can be distinguished by the sign of three algebraic expressions depending
on its coefficients,  given in
Lemma~\ref{l:app18}. Once more,  these
inequalities imply that the set of
parameters for which the vector field has a given number of invariant straight lines
 is measurable. They are also useful to decide which  phase portrait happens when we
 apply the Monte Carlo method to estimate the desired probabilities.

\begin{proof}[Proof of Theorem~\ref{t:teocub}] The phase portraits of cubic homogeneous
vector fields are given in~\cite{A,CL}. It can be seen that the only ones  with positive probability, modulus time orientation,
     are the 9 phase portraits given in Figure~\ref{f:dibuix3}. Set $c_j=P(C_j),$ $j=1,2,\ldots,9.$
     By looking at these phase portraits and with the notation introduced in Theorem~\ref{te:tot} it holds that
     \begin{align*}
     &u_3(-3)= c_1,\,\, u_3(-1)= c_2+c_6 ,\,\, u_3(1)= c_3+c_4+c_7+c_9,\,\, u_3(3)=c_5+c_8,\\
      &\ell_3(0)=c_9,\,\, \ell_3(2)=\sum_{j=6}^8 c_j,\,\, \ell_3(4)=\sum_{j=1}^5 c_j.
     \end{align*}
     By Theorem~\ref{te:tot} we know that  $u_3(3)=u_3(-3),$ $u_3(1)=u_3(-1)$  and
     $2\ell_3(2)+4\ell_3(4)=\Lambda_3.$ Joining all these equalities, the ones stated in the theorem follow.

Finally, notice that, contrary to what happen in the quadratic case,
the index at the origin and the number of invariant straight lines
are not enough to distinguish between different phase portraits.
 The phase portraits $C_3$ and $C_4$ have both index
equal to $1$ and have $4$ invariant straight lines.
\end{proof}

 If we look at the equator of the Poincar\'{e}
sphere we see that in the phase portrait $C_4$ the infinite singular
points are successively node-saddle-node-saddle. It can be seen (see~\cite{A}) that
if $t_3(\kappa_j)=0$, then $s_j:=-t_3'(\kappa_j)p_3(1,\kappa_j)<0$
(resp. $s_j>0$) is the condition to get a saddle (resp. a node) at
the singular point at infinity determined by the direction
$y=\kappa_j x.$  Hence, in the case with index $1$ and
4 invariant straight lines, in order to distinguish between the  phase portraits $C_3$ and $C_4$ it is
necessary to consider the four roots of the polynomial equation $t_3(\kappa)=0,$
 $\kappa_1<\kappa_2<\kappa_3<\kappa_4$, and compute
the four signs $s_j$. If  we find that $s_1s_2<0$ and $s_2s_3<0$,
then we have the phase portrait $C_4$. Otherwise we have phase portrait $C_3.$  We note that this type of
conditions on the coefficients  give measurable sets.

Hence in this case to get the values of Table~\ref{t:taula3} we use again the Monte Carlo method
generating $10^8$ random cubic homogeneous vector fields with the desired distribution. For each sample we
compute its index at the origin by using Theorem~\ref{indexcubics} and  we apply Lemma~\ref{l:app18} to
know its number of invariant straight lines. These two values are enough to know the corresponding
phase portrait in seven of the nine cases.  To distinguish
between the phase portraits $C_3$ and $C_4$, in the case  $(i,l)=(1,4),$  we compute the values introduced in the previous paragraph.

\section*{Appendix}
 In this appendix we give
conditions on the coefficients of polynomial equations of degree~$3$
or~$4$ to know its number real zeros zeros
and for degree $3$ also their signs.

\begin{lem}\label{signaturacubics}
Set $p(x)=x^3+ax^2+bx+c,$  $c_2=ab-9c,$ $d_2=a^2-3b$ and
$d_3=-27c^2+18abc-4a^3c+a^2b^2-4b^3.$ Assume that $a\cdot b\cdot c
\cdot c_2\cdot d_2\cdot  d_3\ne0.$ Then the following holds:
\begin{itemize}

\item [(i)] $p$ has a unique real root of multiplicity one, if and only if $d_3<0.$
This root is negative (resp. positive) if  $c>0$ (resp. $c<0$).

\item [(ii)] $p$ has three negative real roots if and only if
$d_3>0,\,c_2>0,\,b>0,c>0.$

\item [(iii)] $p$ has two negative roots and one positive one if and only if
either, $d_3>0,\,c<0,c_2>0$ or $d_3>0,\,c<0,\,c_2<0,b<0.$

\item  [(iv)] $p$ has one negative root and two positive roots if and only if
either, $d_3>0,\,c>0,c_2<0$ or $d_3>0,\,c>0,\,c_2>0,b<0.$

\item [(v)] $p$ has three positive roots if and only if
$d_3>0,\,c_2<0,\,b>0,c<0.$
\end{itemize}
\end{lem}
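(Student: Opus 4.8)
The plan is to characterize the number and signs of the real roots of the cubic $p(x)=x^3+ax^2+bx+c$ entirely through the sign of its discriminant $d_3$ together with a handful of auxiliary sign conditions, under the genericity hypothesis $a\,b\,c\,c_2\,d_2\,d_3\neq 0$. The overarching tool is the classical theory of Sturm sequences, as the paper itself suggests in Section~\ref{ss:invlines} and Section~\ref{ss:index}. First I would recall the discriminant dichotomy: for a cubic the discriminant $d_3$ is positive exactly when all three roots are real and distinct, and negative exactly when there is one real root and a pair of complex conjugate roots. Since $d_3\neq 0$, these are the only two cases, which immediately settles item~$(i)$ as far as \emph{counting} roots: $p$ has a unique simple real root if and only if $d_3<0$. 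The sign of that lone real root in item~$(i)$ follows from evaluating $p(0)=c$; since the product of all three roots equals $-c$ and the two complex roots have positive product (being conjugates), the real root has the sign of $-c$, giving negative root when $c>0$ and positive root when $c<0$.

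For the remaining items $(ii)$--$(v)$ we are in the regime $d_3>0$, so all three roots $x_1,x_2,x_3$ are real and distinct, and the task reduces to determining \emph{how many are positive and how many are negative}. Here I would pass to Descartes' rule of signs combined with the elementary symmetric functions, which under $d_3>0$ become exact rather than merely bounding: the number of positive roots of $p$ equals the number of sign changes in the coefficient sequence $(1,a,b,c)$, and the number of negative roots equals the number of sign changes of $p(-x)=-x^3+ax^2-bx+c$, i.e.\ of $(1,-a,b,-c)$. Since $x_1x_2x_3=-c$ and $x_1x_2+x_1x_3+x_2x_3=b$ and $x_1+x_2+x_3=-a$, the signs of $b$ and $c$ already force most of the configuration; the auxiliary quantity $c_2=ab-9c$ is what resolves the ambiguous cases where Descartes' rule permits two possibilities. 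Concretely, I would tabulate the four possible sign patterns $(\text{positive},\text{negative})\in\{(3,0),(2,1),(1,2),(0,3)\}$ against the signs of $b$, $c$, and $c_2$, matching each to items $(v),(iv),(iii),(ii)$ respectively.

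The main obstacle, and the step that genuinely needs $c_2$, is disambiguating the cases where Descartes' rule leaves two candidates open — for instance distinguishing three positive roots from one-positive-two-negative when $c<0,b>0$. The role of $c_2=ab-9c$ is to detect the sign of $p$ at a distinguished point or, equivalently, to compare the magnitudes of the symmetric functions; I expect $c_2$ to arise naturally as (a constant multiple of) the value of $p$ at the critical point, or from the Sturm remainder, and I would verify by a direct computation that $c_2>0$ versus $c_2<0$ separates the two admissible sign patterns precisely as stated. Thus the plan is: settle the count by $\operatorname{sign}(d_3)$; in the all-real case use the signs of $b$ and $c$ to pin down the root distribution up to at most one binary ambiguity; and resolve that last ambiguity with the sign of $c_2$, checking each of the branches in $(iii)$ and $(iv)$ against an explicit reference cubic to confirm the inequalities. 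The role of $d_2=a^2-3b$ is only to guarantee genericity (it vanishes when the cubic has a repeated critical point), so it enters through the nondegeneracy hypothesis rather than the case analysis itself.
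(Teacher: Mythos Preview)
Your treatment of item~$(i)$ is fine and in fact cleaner than the paper's: the paper runs Sturm's theorem on $(-\infty,0)$ and $(0,\infty)$ even in the $d_3<0$ case, whereas your Vieta argument (the complex conjugate pair contributes a positive factor to $-c$) settles the sign of the unique real root immediately.

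For items $(ii)$--$(v)$ your plan diverges from the paper and has a gap in the matching step. The paper does \emph{not} use Descartes; it computes the Sturm chain
\[
p_0=p,\quad p_1=p',\quad p_2(x)=\tfrac{1}{9}\bigl(2d_2x+c_2\bigr),\quad p_3=\tfrac{9}{4d_2^2}\,d_3,
\]
evaluates the sign sequence at $-\infty,\,0,\,+\infty$, and reads off the number of negative and positive roots from $V(-\infty)-V(0)$ and $V(0)-V(+\infty)$. Here $c_2$ is literally $9\,p_2(0)$, and $d_2$ is the leading coefficient of $p_2$; the paper notes that $d_3>0$ forces $d_2>0$, which fixes the signs at $\pm\infty$, so only the signs of $c,\,b,\,c_2$ at $x=0$ remain. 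The lemma's conditions drop straight out of the table.

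Your Descartes route is valid in principle (with all coefficients nonzero and all roots real, Descartes is exact), but it produces conditions on $\operatorname{sign}(a)$, not $\operatorname{sign}(c_2)$. Your diagnosis of why $c_2$ is needed is therefore off: Descartes leaves \emph{no} ambiguity here, so $c_2$ is not a tie-breaker but simply the quantity the Sturm remainder outputs. To recover the lemma as stated you would have to prove, case by case, that under $d_3>0$ and the relevant signs of $b,c$ one has $\operatorname{sign}(a)=\operatorname{sign}(c_2)$ (or the appropriate variant). For instance, in case~$(ii)$ this amounts to showing that three distinct positive reals $r_1,r_2,r_3$ satisfy $e_1e_2-9e_3>0$, an AM--GM-type inequality that is true but not the ``direct computation'' you envision. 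The Sturm route you mention in passing is exactly what the paper does and avoids this detour entirely; commit to it rather than Descartes. Finally, $d_2$ is not purely a genericity placeholder: in the Sturm chain its sign governs $p_2(\pm\infty)$, and the observation $d_3>0\Rightarrow d_2>0$ is used explicitly.
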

\begin{proof}
The proof is based in Sturm's method which asserts that if
$\big(p_0=p,p_1\ldots,p_n\big)$ is a Sturm's sequence of $p$ in $[a,b]$
with $p(a)\cdot p(b)\ne 0,$ then the number of real zeros of $p$ in
$(a,b)$ is $V(a)-V(b)$ where $V(x)$ is the number of changes of sign
in the ordered sequence $\big(p_0(x),p_1(x),\ldots,p_n(x)\big),$ where the zeroes are disregarded.
For any polynomial without multiple roots such a sequence always exists, see~\cite{SB}. For
$p,$ without multiple roots and $d_2\ne0,$ one Sturm sequence is $p_0=p,$ $p_1=p'$ and
\[
p_2(x) = \frac{1}{9}\left(2d_2x+c_2\right),\quad
p_3(x) = \frac{9}{4\,d_2^2}\,d_3.
\]
The quantity $d_3$ is the classical
discriminant of $p$ and it is known that $d_3<0$ if and only if $p$
has a real root and two complex ones whereas $d_3>0$ if and only if
$p$ has three real roots (see \cite{CL} for instance). We are going
to consider the following two tables depending on the sign of $d_3$ where $b,c,d_2,c_2$ stands for
their respective signs.

\smallskip

\begin{center}\begin{tabular}{cc}
        \begin{tabular}{|c|c c c|}
            \hline
            & $-\infty$ & $0$ & $\infty$  \\ [0.5ex]
            \hline\hline
            $p_0$& $-$ & $c$ & $+$  \\
            $p_1$&  $+$ & $b$ & $+$ \\
            $p_2$&  $-d_2$ & $c_2$ & $d_2$ \\
            $p_3$&  $-$ & $-$ & $-$\\ [1ex]
            \hline
        \end{tabular}\quad\quad\quad\quad\quad\quad
        &
        \begin{tabular}{|c|c c c|}
            \hline
            & $-\infty$ & $0$ & $\infty$  \\ [0.5ex]
            \hline\hline
            $p_0$& $-$ & $c$ & $+$  \\
            $p_1$&  $+$ & $b$ & $+$ \\
            $p_2$&  $-d_2$ & $c_2$ & $d_2$ \\
            $p_3$&  $+$ & $+$ & $+$\\ [1ex]
            \hline
        \end{tabular}\\
        $d_3<0$\quad\quad\quad\quad\quad\quad& $d_3>0$
\end{tabular}
\end{center}

\noindent We separate the proof in two cases, depending on the sign of $d_3.$

\textbf{Case 1.} Assume that $d_3<0.$
We observe that $V(-\infty)=2$ and $V(+\infty)=1.$ Then
\begin{itemize}
\item The polynomial $p$ has a negative real root if and only if
$V(0)=1.$ It is easy to see that it happens when $c>0$ and one of the three
following conditions hold:
\[c_2>0,b>0;\quad
c_2<0,b>0;\quad
c_2<0,b<0.\]
We observe that $d_3<0\,,\,c_2>0\,,\,b<0\,,\,c>0$ is not compatible
because then $V(0)-V(+\infty)=3$ but $V(-\infty)-V(+\infty)=1.$
Summarizing, $p$ has a negative real root and two more complex ones
if and only if $d_3<0,\,c>0.$

\item The polynomial $p$ has a positive real root if and only if
$V(0)=2.$ It is easy to see that it happens when $c<0$ and one of the three
following conditions hold:
\[
c_2>0,b>0;\quad
c_2>0,b<0;\quad
c_2<0,b>0.
\]
As before conditions $d_3<0,c_2<0,b<0,c<0$ are incompatible and then
$p$ has a positive real root and two more complex ones if and only
if $d_3<0,\,c<0$ as announced.
\end{itemize}

\noindent \textbf{Case 2.} Assume that $d_3>0$ and consider the above right-hand side table of signs.
We see that in that case $d_2$ must be positive. Otherwise
$V(-\infty)=1,\,V(\infty)=2$ which is not possible (we can also
argue that since $d_3>0$ implies that $p$ has three real roots, its
derivative has two real roots and hence its discriminant which is equal to
$4\,d_2$ has to be positive).

It is straightforward to see that items $(ii),(iii),(iv),(v)$ are
equivalent to $V(0)=0,V(0)=1,V(0)=2,V(0)=3$ respectively and that
these number of changes of sign are satisfied exactly when the
conditions on $b,c,c_2$ are the ones stated in the lemma.
~\end{proof}

\begin{lem}\label{l:app18}
Let $p(x)=x^4+ax^3+bx^2+cx+d.$ Assume that $c\cdot d\cdot c_2\cdot
c_3\cdot d_2\cdot d_3\cdot d_4\ne0.$ Then the following holds:
\begin{itemize}

\item [(a)] $p$ has two simple real and two complex roots if and
only if $d_4<0.$

\item [(b)] $p$ has four different real roots if and only if $d_4>0,d_2>0$
 and  $d_3>0.$

\item [(c)] $p$ has no real root  if and only if $d_4>0$
and either, $d_2<0$ or $d_3<0.$

\end{itemize}
\end{lem}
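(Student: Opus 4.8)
The plan is to mimic the proof of Lemma~\ref{signaturacubics} and apply Sturm's method to $p(x)=x^4+ax^3+bx^2+cx+d$, but now only the \emph{total} number of real roots is required, so it suffices to compute $V(-\infty)-V(+\infty)$ for a single Sturm chain, without localizing the roots by sign. First I would construct the sequence $p_0=p$, $p_1=p'=4x^3+3ax^2+2bx+c$, and then $p_2,p_3,p_4$ as the successive negated remainders. A direct division gives
\[
16\,p_2=(3a^2-8b)\,x^2+(2ab-12c)\,x+(ac-16d),
\]
so that, up to a positive factor, the leading coefficient of $p_2$ is $d_2:=3a^2-8b$; continuing the two remaining divisions produces a degree-one polynomial $p_3$ whose leading coefficient is a positive multiple of $d_3$, and a constant $p_4$ which is a positive multiple of $d_4$, the discriminant of $p$. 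The hypothesis $c\cdot d\cdot c_2\cdot c_3\cdot d_2\cdot d_3\cdot d_4\neq0$ is exactly what guarantees that each $p_i$ has its generic degree and that $p$ has no multiple roots, so that $(p_0,p_1,p_2,p_3,p_4)$ is a genuine Sturm chain.

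Next I would record the signs of the five polynomials at $\pm\infty$ in terms of $\sign(d_2),\sign(d_3),\sign(d_4)$. Since $p_0$ has even degree and positive leading coefficient, $p_1$ odd degree and positive leading coefficient, $p_2$ even degree, $p_3$ odd degree and $p_4$ is constant, the sign vectors are
\[
\big(p_0,\dots,p_4\big)(+\infty)=\big(+,\,+,\,\sign(d_2),\,\sign(d_3),\,\sign(d_4)\big),
\]
\[
\big(p_0,\dots,p_4\big)(-\infty)=\big(+,\,-,\,\sign(d_2),\,-\sign(d_3),\,\sign(d_4)\big).
\]
I would then tabulate $V(-\infty)$ and $V(+\infty)$ over the eight sign patterns of $(d_2,d_3,d_4)$ and read off $N=V(-\infty)-V(+\infty)$. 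When $d_4<0$ every realizable pattern yields $N=2$, giving item~(a); when $d_4>0$ one finds $N=4$ precisely for $d_2>0,d_3>0$ (item~(b)) and $N=0$ for the three patterns with $d_2<0$ or $d_3<0$ (item~(c)).

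As in the cubic lemma, some sign patterns must be discarded as incompatible: for instance the combination $d_2<0,d_3>0,d_4<0$ would force $V(-\infty)-V(+\infty)=-2<0$, which is impossible for a root count, so it cannot occur for an actual quartic. The main obstacle I anticipate is purely computational: carrying out the last two polynomial divisions to obtain the exact expressions for $d_3$ and $d_4$, and checking that the normalizing factors introduced by writing $p_i=-\mathrm{rem}(p_{i-2},p_{i-1})$ are positive, so that the signs of the leading coefficients of $p_3$ and $p_4$ are genuinely $\sign(d_3)$ and $\sign(d_4)$. Once these signs are confirmed, the counting and the elimination of the incompatible patterns are routine, and the three equivalences follow at once. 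As a sanity check, item~(a) also follows from the classical fact that a real quartic has two real and two complex roots exactly when its discriminant $d_4$ is negative.
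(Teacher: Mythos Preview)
Your proposal is correct and follows essentially the same route as the paper: both build the Sturm chain $p_0,p_1,p_2,p_3,p_4$, observe that the leading coefficients of $p_2,p_3,p_4$ have the signs of $d_2,d_3,d_4$ (the paper's explicit formulas $p_3=\frac{16(2d_3x+c_3)}{d_2^{2}}$ and $p_4=\frac{d_2^{2}d_4}{64d_3^{2}}$ confirm the positivity of the normalizing factors you were worried about), and then read off $V(-\infty)-V(+\infty)$. The only cosmetic difference is that the paper disposes of item~(a) by quoting the classical discriminant criterion and uses the Sturm table solely for the case $d_4>0$, whereas you run the Sturm count uniformly over all eight sign patterns of $(d_2,d_3,d_4)$ and discard the impossible ones; both arrive at the same conclusions.
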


\begin{proof}
A  Sturm sequence of $p,$ is
$p_0=p,$  $p_1=p',$
\[
p_2(x) = \frac{1}{16}\left(d_2x^2+2(ab-2c)x+c_2\right),\,\,
p_3(x) = \frac{16(2d_3x+c_3)}{d_2^2},\,\,
p_4(x) = \frac{d_2^2d_4}{64d_3^2}.
\]
where
$$\begin{array}{l} d_2=3\,{a}^{2}-8\,b,\\c_2=ac-16 d,\\
d_3=-3\,{a}^{3}c+{a}^{2}{b}^{2}-6\,{a}^{2}d+14\,abc-4\,{b}^{3}+16\,bd-18\,
{c}^{2}
,\\c_3=-9\,{a}^{3}d+{a}^{2}bc+32\,abd+3\,a{c}^{2}-4\,{b}^{2}c-48\,cd,\\d_4=-27\,{a}^{4}{d}^{2}+18\,{a}^{3}bcd
-4\,{a}^{3}{c}^{3}-4\,
{a}^{2}{b}^{3}d+{a}^{2}{b}^{2}{c}^{2}+144\,{a}^{2}b{d}^{2}-6\,{a}^{2}{c}^{2}d-80\,a{
    b}^{2}cd+\\\qquad+18\,ab{c}^{3}+16\,{b}^{4}d-4\,{b}^{3}{c}^{2}-192\,ac{d}^{2}-
128\,{b}^{2}{d}^{2}+144\,b{c}^{2}d-27\,{c}^{4}+256\,{d}^{3}.
\end{array}$$
We note that $d_4$ is the discriminant of $p$ and since
$d_4\ne 0$ all its roots are simple.

It is known, see \cite{CL} for instance, that $d_4<0$ if and only if
$p$ has two simple real and two complex roots. And that $d_4>0$ if
and only if $p$ has either, four different real roots or  no real root. It is very easy to distinguish between these
last two possibilities using the Sturm method. Consider the
corresponding table when $d_4>0,$ where again each value stands for its sign.
\begin{table}[H]
\centering
\begin{tabular}{|c|c c c|}
 \hline
 &$-\infty$ & $0$ & $\infty$  \\ [0.5ex]
 \hline\hline
$p_0$&  $+$ & $d$ & $+$  \\
$p_1$& $-$ & $c$ & $+$  \\
$p_2$&  $d_2$ & $c_2$ & $d_2$ \\
$p_3$&  $-d_3$ & $c_3$ & $d_3$ \\
$p_4$&  $+$ & $+$ & $+$\\ [1ex]
 \hline
 \end{tabular}
\end{table}

If $p$ has four real roots then $V(-\infty)-V(\infty)$ must be four
and this only can happen if $V(-\infty)=4$ and $V(\infty)=0$ what
immediately says that $d_2$ and $d_3$ must be positive. If $d_4>0$
and $d_2<0$ or $d_3<0$ then $V(-\infty)-V(+\infty)=2-2=0$ and the
result follows.~\end{proof}

\section*{Acknowledgments}    We want to acknowledge Prof. Maria Jolis for her helpful indications.

\medskip

\noindent We also acknowledge the anonymous reviewer for the careful reading and for addressing us very interesting comments. Some of his/her indications are summarized in Remark \ref{re:invariants}.  We also thank him/her for sharing with us the results of his/her Monte Carlo  simulations using the approximation of that remark, which are consistent with our results.

\section*{Data availability}    All data analyzed in this study are included in this article.


\begin{thebibliography}{12}

\bibitem{A} J. Arg\`emi.  \textsl{Sur les points singuliers multiples de syst\`emes dynamiques dans $\R^2.$}
Ann. Mat. Pure and Appl. 79  (1968), 35--69.

\bibitem{Arnold} V.I.~Arnold. Eksperimental'naya matematika (Experimental Mathematics). Fazis, Moscow 2005.


\bibitem{AVG}  V.I.~Arnold, A.N.~Varchenko, S.M.~Gusein-Zade.
 Singularities of differentiable maps, Volume 1. The classification of critical points,
 caustics and wave fronts. Birk\-h\"auser/Springer, New York 2012.


\bibitem{AL} J.C.~Art\'es, J.~Llibre. \textsl{Statistical measure of quadratic vector fields.} Resenhas 6 (2003),  85--97.

\bibitem{ALSV} J.C.~Art\'es, J.~Llibre, D.~Schlomiuk, N.~Vulpe.  \textsl{From topological to geometric equivalence in the classification of singularities at infinity for quadratic vector fields. }
 Rocky Mountain J. Math. 45 (2015), 29--113. 


\bibitem{BFS} P.~Bratley, B.L.~Fox, L. E.~Schrage.
A guide to simulation, 2d ed. Springer, New York, 1987.

\bibitem{CS10}
B.M.~Chen-Charpentier, D.~Stanescu. \textsl{Epidemic models with random coefficients.} Math. Comput. Modelling 52 (2010), 1004--1010.

\bibitem{CGM} A. Cima, A. Gasull, V.~Ma\~{n}osa. \textsl{Stability index of linear random dynamical systems} 
arXiv:1904.05725 [math.DS]

\bibitem{CGT} A. Cima, A. Gasull, J. Torregrosa. \textsl{On the relation between index and
multiplicity.} J. London Math. Soc. 57 (1998), 757--768.


\bibitem{CL} A. Cima, J. Llibre. \textsl{Algebraic and topological classification of the homogeneous cubic
vector fields in the plane} J. Math. Anal. Appl. 747
(1990), 420--448.

\bibitem{Conrad} K.~Conrad. \textsl{Probability distributions and maximum entropy}.

https://kconrad.math.uconn.edu/blurbs/analysis/entropypost.pdf (Accessed July 30, 2020).

\bibitem{D} T.~Date. \textsl{Classification and
analysis of two--dimensional real homogeneous quadratic differential
equation systems.} J. Differential Equations 32 (1979), 311-334.

\bibitem{DLA} F.~Dumortier, J.C.~Art\'es, J.~Llibre. Qualitative Theory of planar differential systems. Springer, Berlin 2006.

\bibitem{EK} A.~Edelman, E.~Kostlan. \textsl{How many zeros of a random polynomial are real?} Bull. Amer. Math. Soc. 32 (1995), 1--37.

\bibitem{EL} D.~Eisenbud, I.~Levine. \textsl{An algebraic formula for the degree of a $C^\infty$ map germ.} Annals of Math. 106 (1977), 19--44.

\bibitem{GS} M.~Grigoriu, T.~Soong. Random vibration in mechanical and structural systems. Prentice-Hall, Englewood Cliffs NJ 1993.


\bibitem{LBA} G.A.~Leonov, I.G.~Burova, K.D.~Aleksandrov.
\textsl{Visualization of four limit cycles of two-dimensional
quadratic systems in the parameter space.} Differential Equations 49
(2013), 1675--1703.



    \bibitem{LPR}  J.~Llibre, J.S.~P\'{e}rez del R\'{\i}o, J.A.~Rodr\'{\i}guez.
    \textsl{Structural stability of planar homogeneous polynomial vector fields: applications to critical points and to infinity.}
    J. Differential Equations 125 (1996),  490--520.

\bibitem{LPP} H.~Lopes, B.~Pagnoncelli, C.~Palmeira. \textsl{Coeficientes aleat\'{o}rios de equa\c{c}\~{o}es diferenciais ordin\'{a}rias lineares.}
Matem\'{a}tica Universit\'{a}ria 14  (2008), 44--50. English version: \cite{PLPP}.


\bibitem{Mars} G.~Marsaglia. \textsl{Choosing a point from the surface of a sphere.} The Annals of Mathematical Statistics 43 (1972), 645--646.

\bibitem{Milnor} J. Milnor. \textsl{Topology from a
differentiable Viewpoint.} The University Press of Virginia,
Charlottesville, 1965.


\bibitem{Morgan} B.J.T.~Morgan. Applied stochastic modelling. Arnold Publishers/Oxford, London, 2000.

\bibitem{Mull} M.E.~Muller. \textsl{A note on a method for generationg points uniformly on $N$-dimensional
spheres.} Communications of the ACM 2 (1959), 19--20.

\bibitem{SC09} D.~Stanescu, B.M.~Chen-Charpentier. \textsl{Random coefficient
    differential equation models for bacterial growth}. Math. Comput. Modelling 50 (2009),  885--895.

\bibitem{SB} J.~Stoer, R.~Bulirsch. Introduction to Numerical
Analysis. Springer, New York 2002.



\bibitem{Strog} S.H.~Strogatz. Nonlinear dynamics and chaos. Westview Press, Cambridge MA, 1994.

\bibitem{PLPP} B.~Pagnoncelli, H.~Lopes,  C.~Palmeira, B.~Palmeira. \textsl{Random linear systems and simulation.}
Preprint MAT. 18/06, Pontif\'\i cia Universidade Cat\'olica
de Rio de Janeiro.

http://bernardokp.uai.cl/preprint.pdf (Accessed July 30, 2020).


\end{thebibliography}
\end{document}